\documentclass[reqno,10pt]{amsart}
\usepackage{amsmath,amsthm,amsfonts,color,graphicx,soul}
\usepackage[latin1]{inputenc}
\usepackage[makeroom]{cancel}
\usepackage{tikz}

\usetikzlibrary{decorations.pathreplacing}

\oddsidemargin=0cm 
\evensidemargin=0cm
\topmargin=-0.5cm
\textwidth=15cm
\textheight=21cm

\newtheorem{thm}{Theorem}[section]
\newtheorem{lem}[thm]{Lemma}

\newtheorem{prop}[thm]{Proposition}

\newtheorem{rem}[thm]{Remark}

\numberwithin{equation}{section}

\definecolor{mygreen}{RGB}{228,0,0}

\DeclareMathAlphabet{\mathpzc}{OT1}{pzc}{m}{it}


\newcommand{\Z}{\mathbb Z}
\newcommand{\R}{\mathbb R}

\begin{document}
\bibliographystyle{plain}

\title[On a model for a sliding droplet]{On a model for a
    sliding droplet:\\ Well-posedness  and stability of  translating
    \\ circular solutions}

\author{Patrick Guidotti}
\address{University of California, Irvine\\
Department of Mathematics\\
340 Rowland Hall\\
Irvine, CA 92697-3875\\ USA }
\email{gpatrick@math.uci.edu}

\author{Christoph Walker}
\address{Leibniz Universit\"at Hannover\\
Institut f\"ur Angewandte Mathematik\\
Welfengarten 1\\
30167 Hannover\\
Germany}
\email{walker@ifam.uni-hannover.de}

\begin{abstract}
In this paper the model for a highly viscous droplet sliding down an
inclined plane is analyzed. It is shown that, provided the slope is
not too steep, the corresponding moving boundary problem possesses
classical solutions. Well-posedness is lost when the relevant
linearization ceases to be parabolic.  This occurs above a critical
incline which depends on the shape of the initial wetted region as
well as on the liquid's mass. It is also
shown that translating circular solutions are asymptotically stable 
if the kinematic boundary condition is given by an affine function
and provided the incline is small.
\end{abstract}

\keywords{Sliding droplet, contact angle motion, moving boundary
    problem, translating solutions.}
\subjclass[2010]{35R37, 35B40, 35C07, 35Q35}

\maketitle

\section{Introduction}
Of interest is the analysis of a model for the motion under gravity of a highly
viscous droplet on an  inclined homogeneous substrate as
depicted below.\\
\begin{center}
\begin{tikzpicture}[scale=1.5]
  \draw[shade,gray] (-4,1.2) arc (150:-4:0.75);
  \draw (-5,0)--(0,0)--(-5,1.5);
  \draw (-1,0) arc (180:146:0.5);
  \node at (-1.25,0.175) {$\chi$};
  \draw[->]  (-2.5,1.05)--(-2,.9);
  \node at (-2.2,1.15) {$e_1$};
  \node at (0,-0.3) {};
  \draw[dashed] (-3.25,{39/40})--({-3.25+1.1*.15},{39/40+1.1*.5});
  \node at (-3.45,1.25) {\tiny$u(t,x)$};
\end{tikzpicture}
\end{center}
The droplet is characterized by the wetted region $\Omega (t)=[u(t,\cdot)>0]$ on the
substrate and a height field $u(t,x)$ measured in direction normal to
the plane of motion at points $x\in \Omega(t)$, as
  depicted above. It follows that
$u(t,x)=0$ for $x\in \Gamma(t):=\partial \Omega(t)$.  
The system reads
 \begin{alignat}{2}\label{d1}
  -\Delta u&=\mu x^1+\lambda&&\text{in }\Omega(t),\\
 \label{d2}
  u&=0&&\text{on }\Gamma(t),\\\label{d3}
  \int _{\Omega(t)}u\, dx&=\mathcal{V},&&\\\label{d4}
  V&=F(-\partial _\nu u)\qquad&&\text{on }\Gamma(t),\\\label{d5}
  \Omega(0)&=\Omega^0,&&
 \end{alignat}
where the parameter $\mu\geq 0$ is related to the inclination
$\chi\geq 0$ via $\mu=\mathrm{Bo}\sin\chi$, where $\mathrm{Bo}$ is
the Bond number representing the relative magnitude
of gravity and viscous forces. The unknown $\lambda$
can be thought of as a time-dependent Lagrange multiplier for the
third condition enforcing conservation of the total volume
$\mathcal{V}>0$ (actually, it is a
constant resulting from integrating the original fourth-order equation
for $u$, see \cite{BCP}). The vector $\nu$ is 
the outward unit normal to the boundary $\Gamma(t)$ of the wetted region
and $V$ is the speed in normal direction of the same
boundary. Coordinates $x=(x^1,x^2)\in \mathbb{R}^2$ are used to
describe points on the inclined plane, where $x^1$ is the coordinate
in the direction of motion. Observe that  the location of the origin
of the coordinate system is irrelevant as any translation
in the direction of $e_1$ is absorbed by the Lagrange
  multiplier $\lambda$ and the problem is invariant under
translations in the direction of $e_2$.
Equation~\eqref{d4} corresponds to a kinematic boundary condition
relating the normal velocity to the (small) dynamic contact angle
through an empirical law described by the function $F$.
The dimensionless equations  \eqref{d1}--\eqref{d5} for an inclined
plane were derived in \cite{BCP} under the assumption that
a lubrication approximation is applicable and the
Navier-Stokes equations thus greatly simplify. Actually, the model for
a droplet on a horizontal plane with $\chi=0$ and hence $\mu=0$ was
introduced some time ago in \cite{Gr78}. For numerical experiments and
numerical schemes in this case we refer to \cite{HM} and \cite{Gl},
respectively. For this case local and global existence results for
generalized weak solutions are to be found in
\cite{KGl,KGr}. Moreover, short time existence of classical solutions
was proved in \cite{EG} while in \cite{PG1} circles were
identified as the only equilibria and shown to be locally asymptotically stable. 

The situation for an inclined plane with $\chi>0$ (and
hence $\mu>0$) considered herein is somewhat different. Indeed, in
\cite{BCP} it was shown that for an affine\footnote{This means that
  the normal velocity is proportional to the difference between
  equilibrium and dynamic contact angle for the fluid-substrate
  system. Such a form was derived in \cite{BCP} as the linearization
  for small contact angles of the ``Cox-Voinov law'', a simple
  particular choice of the many available laws for $F$, see
  \cite{PFL,BCP}.} function $F$ there is a 
critical inclination of the substrate below which a translating circular
solution to \eqref{d1}--\eqref{d5} exists moving at a constant
speed, while such a solution ceases to exist if the incline is
increased any further. More precisely, consider a 
droplet sliding down the substrate at a constant speed
  $v_0>0$, so that the wetted region is of the form
$\Omega(t)=\Omega_*+tv_0e_1$ with normal velocity  $V=v_0 e_1\cdot
\nu$, where $e_1$ is the unit vector in $x^1$-direction and
$u(t,x)=u_0(x-tv_0e_1)$. Then it is readily seen that 
\eqref{d1}--\eqref{d4} is equivalent to
 \begin{alignat}{2}\label{d1b}
  -\Delta u_0&=\mu x^1+\lambda&&\text{in }\Omega_*,\\
 \label{d2b}
  u_0&=0&&\text{on }\Gamma_*,\\\label{d3b}
  \int _{\Omega_*}u_0\, dx&=\mathcal{V},&&\\\label{d4b}
 F( -\partial _\nu u_0)&=v_0 e_1\cdot \nu\qquad&&\text{on }\Gamma_*,
 \end{alignat}
for the unknowns $u_0$, $\lambda$, and $\Omega_*$ with
$\Gamma_*=\partial\Omega_*$. Using polar coordinates, set
\begin{equation}\label{ui}
u_0(r,\theta):=\frac{\mu r}{8} (R_0^2-r^2)\cos \theta+\frac{2
    \mathcal{V}}{\pi R_0^4}(R_0^2-r^2),\quad r\in [0,R_0],\quad
\theta\in [0,2\pi],  
\end{equation}
which is easily checked to be the (unique) solution to
\eqref{d1b}--\eqref{d3b} on the disk $\Omega_*=R_0\mathbb{B}$. Here 
$\mathbb{B}=\mathbb{B}_{\mathbb{R}^2}$ denotes the two-dimensional
unit ball centered at the origin. Moreover, \eqref{d4b} becomes  
\begin{equation}\label{N}
F\left(\frac{\mu R_0^2}{4}\cos\theta+\frac{4\mathcal{V}}{\pi
  R_0^3}\right)=v_0\cos\theta,\quad \theta\in [0,2\pi]. 
\end{equation}
If $\mu=0$, that is, if the incline vanishes, then \eqref{N} implies
that $v_0$ must vanish and $F$ must posses a zero at 
  $4\mathcal{V}/\pi R_0^3$. In particular, no translating solution
can exist on a disk if $\mu=0$. If $\mu>0$, then
  \eqref{N} implies that translating solutions only exist provided
that $F$ is affine, i.e. 
\begin{equation}\label{affine}
F(q)=aq-b,\quad q\in [-v_0,v_0],\qquad a, b>0,
\end{equation}
as in \cite{BCP}. Therefore, if $F$ is affine, then there are a unique
radius   and a unique velocity, given by 
\begin{equation}\label{R0}
R_0=\left(\frac{4 \mathcal{V}a}{\pi b}\right)^{1/3},\qquad
v_0=\frac{\mu a}{4}\left(\frac{4 {\mathcal{V}}a}{\pi
    b}\right)^{2/3}, 
\end{equation}
such that \eqref{N} holds. While the velocity is of order $\mu$, the
radius is independent of the incline. 
Consequently, if $\mu>0$, then \eqref{affine} is a sufficient, but also
necessary condition for a disk to be 
a translating geometry solution of \eqref{d1}--\eqref{d5}, that is,
for the existence a (unique) 
solution $u_0$ to problem \eqref{d1b}--\eqref{d4b} on a disk 
$\Omega_*=R_0\mathbb{B}$ which is then given by
\eqref{ui} with uniquely determined $R_0$ and $v_0$ in dependence
of $a$ and $b$. However, to guarantee the positivity of
$u_0$ in $\Omega_*$ we need $16\mathcal{V}\ge \mu \pi R_0^5$ as is seen by taking
$\theta=\pi$ in \eqref{ui}. Writing $\mu=\mathrm{Bo} \sin\chi$ we
derive the physical restriction on the maximal inclination of the
substrate as 
$$
\sin\chi\le \frac{16 \mathcal{V}}{\pi \mathrm{Bo} R_0^5}. 
$$
Therefore, circular solutions to \eqref{d1}-\eqref{d4} only exist if
either $\mu=0$ and $F$ has a zero or if $\mu>0$ is sufficiently small
and $F$ is an affine function. It is an interesting question to
determine whether non-circular translating solutions exist for
general functions $F$. Observe that this cannot be the case for
$\mu=0$ by Serrin's Rigidity Theorem \cite{Serrin}. For $\mu>0$,
however, the assumptions of  Serrin's theorem are not met since the
right-hand sides of \eqref{d1b} and \eqref{d4b} are no longer
constant. Uniqueness of (non-circular) translating solutions is therefore not
clear. 

In this paper we establish the local well-posedness 
  of \eqref{d1}--\eqref{d5} for general initial droplet geometries
$\partial\Omega_0$ and for general laws $F$ for inclines
  $\chi$ smaller than a positive critical value, that is, we do not impose
any structural conditions on $F$ except that
\begin{equation}\label{F}
F\in C^{4}(\mathbb{R},\mathbb{R}),\quad F'>0.
\end{equation}
In contrast to the often used Hanzawa transformation (e.g. see
\cite{ES, EG, PG1} and the references therein) we shall use to this
end a description of the unknown curve $\Gamma(t)$ by means of
coordinates induced by a smooth flow transversal to the curve
$\Gamma_0=\partial\Omega_0$ (see Section~\ref{Sec2}). 
This approach to moving boundary problems was first introduced in
\cite{PG2} in a more general context and the analysis performed in
this paper therefore provides a concrete demonstration of the benefits
that it offers. In particular, it yields a significant simplification
of the analysis required. In short, problem \eqref{d1}--\eqref{d5} is
reduced to a single nonlocal, nonlinear evolution equation and yields
a relatively simple and insightful formula for its linearization (see
Theorem~\ref{T1}). The latter, not only, is the basis for using
maximal regularity results to obtain local well-posedness (see
Theorem~\ref{T2}), but also for the characterization of the critical
incline beyond which the model ceases to be parabolic in nature and
becomes ill-posed. Finally, we also investigate the stability of
the sliding circular droplet, the existence of which is
  ensured by \eqref{affine}. We show that, for small
  inclines and when starting out with an initial droplet geometry sufficiently close to
the circle of radius $R_0$, the droplet asymptotically becomes circular
of radius $R_0$ sliding down the plane with constant
speed $v_0$  (see Theorem~\ref{T4}).

\section{Reformulation}\label{Sec2}

{System \eqref{d1}--\eqref{d5} can be reduced to a nonlocal geometric
evolution for the unknown closed curve $\Gamma(t)$ enclosing the
simply connected domain $\Omega(t)$. The derivation of a suitable
description of this evolution is the purpose of this section.

Working in a
classical regularity context, we consider domains $\Omega$ with boundary $\partial\Omega$ of class
$buc^{2+\alpha}$ with $\alpha\in (0,1)$, i.e. domains the boundary of which
are locally the graph of functions belonging to the little H\"older
space $buc^{2+\alpha}$. Recall that, for an open subset $O$ of
$\mathbb{R}^n$, the space $\operatorname{BUC}^{2+\alpha}(O)$, defined
by 
\begin{equation*}
\begin{split}
 \operatorname{BUC}^{2+\alpha}(O):=\big\{ u:O\rightarrow
 \mathbb{R}\,\big |\, &\partial^\beta u\text{ is uniformly 
   continuous and bounded for }|\beta
 |\le 2 \\
&\text{ and } [\partial^\beta u]_\alpha<\infty\text{ for }|\beta
 |=2\big\},
\end{split}
\end{equation*}
is a Banach space with respect to the norm
$$
 \| u\|_{2,\alpha}:=\| u\| _{2,\infty}+\max_{|\beta
   |=2}[\partial^\beta u]_\alpha,\qquad u\in
 \operatorname{BUC}^{2+\alpha}(O),
$$
for
$$
 \| u\| _{2,\infty}=\sum_{|\gamma|\leq 2}\| \partial ^\gamma u\| _\infty \quad
 \text{ and }\quad [u]_\alpha :=\sup _{x\neq y}\frac{|u(x)-u(y)|}{|x-y|^\alpha}.
$$
Then $buc^{2+\alpha}(O)$ is the closure of
$\operatorname{BUC}^{2+\alpha+\epsilon}(O)$ in
$\operatorname{BUC}^{2+\alpha}(O)$ for any $\epsilon>0$. Given any
smooth closed curve $\Gamma\in buc^{2+\alpha}$, the space
$buc^{2+\alpha}(\Gamma)$ can be defined in the standard way via local
charts and a partition of unity. 

In the rest of the paper we use the
notation $x\cdot y$ and $(x| y)$ interchangeably for the scalar
product of vectors $x,y\in\mathbb{R}^2$ to enhance the readability of
some formul\ae. 

For a fixed domain $\Omega$ with boundary $\Gamma\in buc^{2+\alpha}$ we
first solve the sub-problem \eqref{d1}--\eqref{d3} to obtain a 
solution $\bigl( u   ,\lambda  \bigr)$.}
We will use the notation $u^{(f)}$ to indicate the
solution of $-\Delta u=f$ in $\Omega$ which vanishes on the boundary
$\Gamma=\partial\Omega$. 

\begin{prop}\label{P1}
For any domain $\Omega$ bounded by a closed curve $\Gamma\in buc^{2+\alpha}$ and any $\mu\ge 0$,
problem \eqref{d1}--\eqref{d3} has a unique solution
$$
( u ,\lambda)\in buc^{2+\alpha}(\Omega)\times
 \mathbb{R} 
$$
with
\begin{equation}\label{lambda}
  \lambda=\frac{{\mathcal{V}}}{\int_\Omega u^{(1)}\, dx}-\mu\frac{\int _\Omega
    u^{(x^1)}\, dx}{\int_\Omega u^{(1)}\, dx}.
\end{equation}
There are $0<\mu_0\le\mu_1$ depending on $\Omega$ and ${
  \mathcal{V}}$ such that {$u>0$ in $\Omega$ and}
$-\partial_{\nu}u>0$ on $\Gamma$ if $\mu\in [0,\mu_0)$ and
$-\partial_{\nu}u<0$ on $\Gamma$ if $\mu\ge\mu_1$. 
\end{prop}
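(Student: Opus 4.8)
The plan is to split the statement into its three assertions and treat them in order of increasing difficulty. First, the existence and uniqueness of $(u,\lambda)$: this is essentially linear algebra on top of the Dirichlet problem. For any $f\in buc^\alpha(\Omega)$ the problem $-\Delta u=f$ in $\Omega$, $u=0$ on $\Gamma$, has a unique solution $u^{(f)}\in buc^{2+\alpha}(\Omega)$ by standard Schauder theory on $buc$-spaces (cf.\ the references to \cite{ES}); by linearity the solution of \eqref{d1}--\eqref{d2} is $u=u^{(\mu x^1+\lambda)}=\mu u^{(x^1)}+\lambda u^{(1)}$, with $\lambda$ still free. Imposing \eqref{d3} gives $\mu\int_\Omega u^{(x^1)}\,dx+\lambda\int_\Omega u^{(1)}\,dx=\mathcal V$. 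Since $u^{(1)}>0$ in $\Omega$ by the maximum principle (it solves $-\Delta u^{(1)}=1>0$ with zero boundary data), the integral $\int_\Omega u^{(1)}\,dx$ is strictly positive, so this equation has a unique solution $\lambda$, namely \eqref{lambda}. This gives both the existence/uniqueness claim and the formula.

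Next, the two sign assertions. Write $-\partial_\nu u=-\mu\,\partial_\nu u^{(x^1)}-\lambda\,\partial_\nu u^{(1)}$ on $\Gamma$. By Hopf's lemma applied to $u^{(1)}$ (which is positive inside, zero on the boundary) we have $-\partial_\nu u^{(1)}>0$ everywhere on $\Gamma$, and by compactness of $\Gamma$ there is $c_0>0$ with $-\partial_\nu u^{(1)}\ge c_0$ on $\Gamma$; also $-\partial_\nu u^{(x^1)}$ and $\partial_\nu u^{(1)}$ are bounded on $\Gamma$, say by $C_1$. When $\mu=0$ we have $\lambda=\mathcal V/\int_\Omega u^{(1)}\,dx>0$, hence $-\partial_\nu u=-\lambda\,\partial_\nu u^{(1)}\ge \lambda c_0>0$; moreover $u=\lambda u^{(1)}>0$ in $\Omega$. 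From \eqref{lambda}, $\lambda=\lambda(\mu)$ depends continuously (indeed affinely) on $\mu$, so $\lambda(\mu)>0$ and the uniform Hopf estimate persist for $\mu$ in a neighbourhood of $0$; more quantitatively, $-\partial_\nu u(\mu)\ge \lambda(\mu)c_0-\mu C_1$, and since $\lambda(0)>0$ this lower bound stays positive for $\mu$ small, which defines $\mu_0$. For positivity of $u$ itself in $\Omega$ for small $\mu$ one argues that $u(\mu)=\mu u^{(x^1)}+\lambda(\mu)u^{(1)}$ converges to $\lambda(0)u^{(1)}>0$ in $buc^{2+\alpha}(\Omega)$ as $\mu\to 0$; since near $\Gamma$ positivity follows from the uniform Hopf bound on $-\partial_\nu u$ together with $u=0$ on $\Gamma$, while on the compact interior region $\{{\rm dist}(\cdot,\Gamma)\ge\delta\}$ we have $\lambda(0)u^{(1)}\ge c_\delta>0$, a uniform-convergence argument gives $u(\mu)>0$ there too for $\mu$ small. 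Shrinking $\mu_0$ if necessary secures both $u>0$ in $\Omega$ and $-\partial_\nu u>0$ on $\Gamma$ for $\mu\in[0,\mu_0)$.

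For the large-$\mu$ statement, divide by $\mu$: $\frac1\mu(-\partial_\nu u)=-\partial_\nu u^{(x^1)}-\frac{\lambda(\mu)}{\mu}\partial_\nu u^{(1)}$, and from \eqref{lambda} we have $\lambda(\mu)/\mu\to -\int_\Omega u^{(x^1)}\,dx/\int_\Omega u^{(1)}\,dx=:-\kappa$ as $\mu\to\infty$, so $\frac1\mu(-\partial_\nu u)\to -\partial_\nu u^{(x^1)}+\kappa\,\partial_\nu u^{(1)}$ uniformly on $\Gamma$. It therefore suffices to show this limit function, call it $g$, is strictly negative on $\Gamma$; then for $\mu\ge\mu_1$ large enough $-\partial_\nu u<0$. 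Now $g=-\partial_\nu w$ where $w:=u^{(x^1)}-\kappa\,u^{(1)}=u^{(x^1-\kappa)}$ solves $-\Delta w=x^1-\kappa$ in $\Omega$, $w=0$ on $\Gamma$, and by the choice of $\kappa$, $\int_\Omega w\,dx=0$. Since $w$ has zero mean and is not identically zero (its Laplacian $x^1-\kappa$ is nonconstant, as $\int_\Omega(x^1-\kappa)\,dx=0$ forces $\kappa$ to be the average of $x^1$, which is attained only on a measure-zero set), $w$ changes sign in $\Omega$; in particular $w$ attains a negative minimum at some interior point, and on the part of $\Gamma$ where $w<0$ just inside, Hopf's lemma gives $-\partial_\nu w<0$ — but we need this on all of $\Gamma$. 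This is the main obstacle: proving $-\partial_\nu w<0$ \emph{everywhere} on $\Gamma$ rather than just somewhere. I expect the resolution to use a sharper structural fact — e.g.\ that $w=u^{(x^1)}-\kappa u^{(1)}$ and both $u^{(x^1)}$ and $u^{(1)}$ are superharmonic-type quantities with controlled boundary behaviour, or an explicit decomposition/comparison exploiting that $u^{(1)}>0$ with $-\partial_\nu u^{(1)}$ bounded below — allowing one to conclude that, once $\mu$ is large, the $\lambda(\mu)u^{(1)}$ term (with $\lambda(\mu)<0$ eventually, since $\kappa>0$) dominates near the boundary. Concretely, I would show $\lambda(\mu)\to-\infty$ and then write $-\partial_\nu u=\mu(-\partial_\nu u^{(x^1)})+\lambda(\mu)(-\partial_\nu u^{(1)})\le \mu C_1+\lambda(\mu)c_0$, which tends to $-\infty$ uniformly on $\Gamma$ provided $\lambda(\mu)/\mu\to-\kappa<0$; so the genuinely needed fact is merely $\kappa=\int_\Omega u^{(x^1)}\,dx/\int_\Omega u^{(1)}\,dx>0$, equivalently $\int_\Omega u^{(x^1)}\,dx>0$. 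That positivity is what I would prove carefully — likely by testing the equation for $u^{(x^1)}$ against $u^{(1)}$: $\int_\Omega u^{(x^1)}\,dx=\int_\Omega(-\Delta u^{(x^1)})u^{(1)}\,dx\cdot(\text{after integration by parts})$ gives $\int_\Omega u^{(x^1)}\,dx=\int_\Omega x^1 u^{(1)}\,dx$, which is positive once the origin is chosen so that $\int_\Omega x^1 u^{(1)}\,dx>0$; and since the whole problem is translation-invariant in $e_1$ only through $\lambda$, one may fix the origin to arrange this, or simply note the sign of $-\partial_\nu u$ on $\Gamma$ is what matters and it flips. With $\mu_1$ chosen so that $\mu C_1+\lambda(\mu)c_0<0$ for all $\mu\ge\mu_1$, the proof is complete.
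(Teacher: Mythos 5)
Your proofs of existence/uniqueness with formula \eqref{lambda} and of the small-$\mu$ sign claims are correct. For the small-$\mu$ part you perturb from $\mu=0$ via continuity of $\lambda(\mu)$ and a uniform Hopf lower bound on $-\partial_\nu u^{(1)}$; the paper instead fixes the origin so that $x^1<0$ on $\Omega$ and observes directly that the source $\mu x^1+\lambda$ stays nonnegative for small $\mu$, then applies the maximum principle and Hopf's lemma. Both routes work, and yours makes the quantitative dependence on $\mu$ somewhat more explicit.

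For $\mu\ge\mu_1$ you correctly flag a gap, and your attempted patch does not close it: the bound $-\partial_\nu u\le\mu C_1+\lambda(\mu)c_0=\lambda(0)c_0+\mu\bigl(C_1-\kappa c_0\bigr)$ tends to $-\infty$ only if $\kappa c_0>C_1$, which $\kappa>0$ alone does not ensure. More to the point, the uniform sign claim cannot be proved this way. Writing $\lambda(\mu)=\lambda(0)+c\mu$ with $c=-\int_\Omega u^{(x^1)}\,dx/\int_\Omega u^{(1)}\,dx=-\int_\Omega x^1 u^{(1)}\,dx/\int_\Omega u^{(1)}\,dx$ (Green's identity), one sees that $c+x^1$ has zero $u^{(1)}$-weighted mean and therefore changes sign in $\Omega$, while $\lambda+\mu x^1=\lambda(0)+\mu(c+x^1)$ is invariant under shifts of the origin, so no normalization removes the sign change; near the boundary point where $x^1$ is largest, $\lambda+\mu x^1>0$ for all $\mu\ge0$. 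Concretely, for a disk formula \eqref{N} gives $-\partial_\nu u_0=\frac{\mu R_0^2}{4}\cos\theta+\frac{4\mathcal V}{\pi R_0^3}$, which is positive at $\theta=0$ for every $\mu$. The paper's own proof for this part, which asserts $\lambda+\mu x^1\le0$ throughout $\Omega$ once $\mu\ge\mu_1$, runs into the same obstruction. What actually can be shown --- and what suffices to break parabolicity in Theorem~\ref{T1} --- is that $-\partial_\nu u<0$ \emph{somewhere} on $\Gamma$ for $\mu$ large; your discussion of $w=u^{(x^1)}-\kappa u^{(1)}$, which has zero mean and hence changes sign, already yields this weaker (and correct) statement.
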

\begin{proof}
Since the right-hand side of \eqref{d1} is smooth, classical theory
for elliptic boundary value problems ensures that
\eqref{d1}--\eqref{d2} has, for any fixed $\lambda$, a unique
solution $u(\lambda)\in buc^{2+\alpha}(\Omega)$ and
$$
 u(\lambda)=\mu u^{(x^1)} +\lambda u^{(1)},
$$
by linearity of the equation. The unknown parameter $\lambda$ is then
determined by \eqref{d3} from
$$
 \mu\int _\Omega u^{(x^1)} \, dx+\lambda\int _\Omega u^{(1)}\,
 dx={\mathcal{V}}, 
$$
which yields {formula \eqref{lambda}. Next note that we can always fix
  the origin in such a way that $x^1$ is negative throughout the
  domain $\Omega$. Thus, since $u^{(1)} >0$ in $\Omega$ by the maximum
  principle, \eqref{lambda} implies that there are $\mu_1\ge \mu_0>0$
  such that $\lambda +\mu x^1\ge 0$ in $\Omega$ for $\mu\in (0,\mu_0)$
  while  $\lambda +\mu x^1\le 0$ in $\Omega$ for $\mu\ge
  \mu_1$. Consequently, if $\mu\in [0,\mu_0)$, then $u>0$ in $\Omega$
  by the maximum principle and $\partial_{\nu}u<0$ on $\Gamma$ by
  Hopf's Lemma, while if $\mu\ge \mu_1$, then  $-\partial_{\nu}u<0$ on
  $\Gamma$.}
\end{proof}

With $u$ in hand, equations
\eqref{d4}--\eqref{d5} amount to a nonlocal (geometric) evolution for
the closed curve $\Gamma(t)$. In order to obtain an evolution equation
for it, it is necessary to gain a local understanding of the manifold
$\mathcal{M}^{2+\alpha}$ of closed curves {in $\mathbb{R}^2$} of class $buc^{2+\alpha}$. A
convenient local parametrization about a given fixed initial curve $\Gamma_0$
is particularly useful. Let $\tau_0$, $\nu_0$ be
the unit tangent and normal vectors for $\Gamma_0$, the latter
pointing out of the domain enclosed by $\Gamma_0$.

\begin{lem}[Tubular Neighborhood]\label{tub}
Given $\Gamma_0\in \mathcal{M}^{2+\alpha}$, there is $r_0>0$
such that
$$
 T_{r_0}(\Gamma_0):=\{ x\in \mathbb{R}^2\, |\, |d(x,\Gamma_0)|<r_0\}
$$
is an open neighborhood of $\Gamma_0$ diffeomorphic to
$\Gamma_0\times(-r_0,r_0)$. The notation $d(\cdot,\Gamma_0)$ is used
for the signed distance function to $\Gamma_0$.
\end{lem}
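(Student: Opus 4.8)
The plan is to realise the tubular neighborhood explicitly as the image of the normal map
$$
\Phi\colon\Gamma_0\times\mathbb{R}\longrightarrow\mathbb{R}^2,\qquad \Phi(p,s):=p+s\,\nu_0(p),
$$
and to identify $\Phi^{-1}$ with the pair (nearest point on $\Gamma_0$, signed distance). Since $\Gamma_0\in\mathcal{M}^{2+\alpha}$, writing it locally by arclength $\gamma$ shows $\tau_0=\gamma'\in buc^{1+\alpha}$ and $\nu_0\in buc^{1+\alpha}$ along $\Gamma_0$, so $\Phi$ is of class $buc^{1+\alpha}$ in $p$ and affine in $s$, hence $C^1$. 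In an arclength chart $\partial_s\Phi(p,0)=\nu_0(p)$ and $\partial_\theta\Phi(p,0)=\tau_0(p)$, so $D\Phi(p,0)$ is invertible; more precisely, using the Frenet identity $\nu_0'=-\kappa\tau_0$ with the bounded curvature $\kappa\in buc^{\alpha}$, one finds $\det D\Phi=1-s\kappa$. By continuity of $D\Phi$ and compactness of $\Gamma_0$ there is $r_1>0$ such that $D\Phi$ is invertible on $\Gamma_0\times(-r_1,r_1)$, and the inverse function theorem makes $\Phi$ there a local $C^1$-diffeomorphism.

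Next I would upgrade this to injectivity on $\Gamma_0\times(-r_0,r_0)$ for some $0<r_0\le r_1$. If this failed, there would exist pairs $(p_n,s_n)\ne(p_n',s_n')$ with $s_n,s_n'\to0$ and $\Phi(p_n,s_n)=\Phi(p_n',s_n')$; extracting convergent subsequences $p_n\to p_*$, $p_n'\to p_*'$ and letting $n\to\infty$ gives $p_*=p_*'$ — here the fact that $\Gamma_0$ is a \emph{simple} closed curve (it bounds a domain) enters — which contradicts injectivity of $\Phi$ on a neighborhood of $(p_*,0)$. An injective local diffeomorphism is a diffeomorphism onto its (open) image, so $\Phi$ restricts to a $C^1$-diffeomorphism of $\Gamma_0\times(-r_0,r_0)$ onto the open set $U:=\Phi\bigl(\Gamma_0\times(-r_0,r_0)\bigr)$, which contains $\Gamma_0=\Phi(\Gamma_0\times\{0\})$.

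It then remains to identify $U$ with $T_{r_0}(\Gamma_0)$ and to recognise the diffeomorphism. On one hand $\operatorname{dist}(\Phi(p,s),\Gamma_0)\le|s|<r_0$, so $U\subseteq T_{r_0}(\Gamma_0)$. On the other hand, if $|d(x,\Gamma_0)|<r_0$, compactness of $\Gamma_0$ provides a nearest point $p\in\Gamma_0$, and the first-order condition $\frac{d}{d\theta}|x-\gamma(\theta)|^2=0$ at the minimum forces $x-p\perp\tau_0(p)$, hence $x=p+s\,\nu_0(p)$ with $|s|=\operatorname{dist}(x,\Gamma_0)<r_0$, i.e. $x\in U$; thus $T_{r_0}(\Gamma_0)=U$ is open. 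Injectivity of $\Phi$ shows this nearest point $p$, and the signed distance $s=d(x,\Gamma_0)$, are unique, so $\Phi^{-1}(x)=\bigl(p,d(x,\Gamma_0)\bigr)$; in particular $d(\cdot,\Gamma_0)$ is as regular as $\Phi^{-1}$ on $T_{r_0}(\Gamma_0)$, and $x\mapsto\bigl(p,d(x,\Gamma_0)\bigr)$ is the asserted diffeomorphism $T_{r_0}(\Gamma_0)\cong\Gamma_0\times(-r_0,r_0)$.

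The one genuinely global step — and the expected main obstacle — is this passage from local to global injectivity, i.e. excluding that far-apart portions of $\Gamma_0$ come within distance $r_0$ of one another; this is precisely where embeddedness of $\Gamma_0$ is used. Everything else is the elementary differential calculus of $\Phi$, the only caveat being that, $\Gamma_0$ being merely of class $buc^{2+\alpha}$, the word ``diffeomorphic'' is to be read in the $C^1$ (indeed $buc^{1+\alpha}$) category.
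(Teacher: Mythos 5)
Your proof follows the same route as the paper's: introduce the normal map $\Phi(p,s)=p+s\,\nu_0(p)$, compute its differential via the Frenet identities to get invertibility for $|s|<1/\|\kappa_0\|_\infty$, invoke the inverse function theorem for local invertibility, and then argue global injectivity separately. The only differences are cosmetic: you give a clean compactness/contradiction argument for the passage from local to global injectivity (where the paper merely asserts a ball-uniqueness condition can be arranged), and you explicitly verify that the image of $\Phi$ is exactly $T_{r_0}(\Gamma_0)$ by characterizing nearest points; both of these are slightly more detailed than, but fully consistent with, the paper's argument. Your caveat about the diffeomorphism being of class $C^1$ (indeed $buc^{1+\alpha}$) rather than smoother is precisely the point that the paper addresses in the following Remark and in Lemma~\ref{gentub}.
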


\begin{proof}
Define the map
$$
 \Phi: \Gamma_0\times(-r_0,r_0) \longrightarrow \mathbb{R}^2,\:
 (y,r)\mapsto y+r\nu_0(y),
$$
and recall that, if $\gamma_0=\gamma_0(s)$ is an arc length
parametrization of $\Gamma_0$ about the point $y\in \Gamma_0$, then
$$
 \gamma_0'(s)=\tau_0(\gamma_0(s))\text{ and }
 d_s\nu_0(\gamma_0(s))=\kappa_0(\gamma_0(s))\tau_0(\gamma_0(s)), 
$$
where $\kappa_0$ denotes the curvature of $\Gamma_0$.
Hence, computing in the above coordinates,
$$
 d_y\Phi(y,r)=\tau_0(y)+r\kappa_0(y)\tau_0(y)\text{ and }
 d_r\Phi(y,r)=\nu_0(y). 
$$
The assumption on $\Gamma_0$ implies that
$$
 \| \kappa _0\| _\infty<\infty,
$$
and thus that $d\Phi(y,r)$ is invertible at least as long as
$1+r\kappa_0(y)>0$, since its columns are orthogonal. This can be
ensured by choosing $r_0<1/\| \kappa _0\| _\infty$ and the inverse
function theorem yields local invertibility of $\Phi$. Assuming
without loss of generality that $r_0$ is also so small that
$$
 \mathbb{B}_{\Gamma_0}(y,2r_0)\times(-2r_0,2r_0)\cap
 \Gamma_0=\mathbb{B}_{\Gamma_0}(y,2r_0),
$$
for each $y\in \Gamma_0$, it follows that $x\in T_{r_0}(\Gamma_0)$
has a unique representation by $\Phi$, yielding global injectivity.
\end{proof}

\begin{rem}
The map $\Phi$ yields a foliation of $T_{r_0}(\Gamma_0)$ by
$buc^{1+\alpha}$ curves since it uses $\nu _0\in
buc^{1+\alpha}(\Gamma_0)$. For technical reasons this regularity is not
sufficient.
\end{rem}

\begin{lem}[Generalized Tubular Neighborhood]\label{gentub}
Given $\Gamma_0\in \mathcal{M}^{2+\alpha}$, there is $r_0>0$ and
curves $\Gamma_r\in \mathcal{M}^{2+\alpha}$ for $r\in(-r_0,r_0)$ such
that 
$$
 \bigcup _{|r|<r_0}\Gamma_r
$$
is an open neighborhood of $\Gamma_0$ that is diffeomorphic to
$
 \Gamma_0\times(-r_0,r_0).
$
\end{lem}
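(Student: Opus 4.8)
The plan is to rerun the construction of Lemma~\ref{tub}, but with the canonical outward normal $\nu_0$ — which, as the preceding Remark notes, is only of class $buc^{1+\alpha}$ — replaced by a \emph{smooth} transversal vector field. Since $\Gamma_0\in\mathcal M^{2+\alpha}$ is a (compact) closed curve, its unit normal satisfies $\nu_0\in buc^{1+\alpha}(\Gamma_0,\mathbb R^2)$, and, working in local charts, mollification produces smooth fields converging to $\nu_0$ in the $buc^{1+\alpha}$-norm, hence in particular uniformly. I would therefore fix $N\in C^\infty(\Gamma_0,\mathbb R^2)$ with $\|N-\nu_0\|_\infty$ so small that $(N(y)\,|\,\nu_0(y))\ge\tfrac12$ and $\tfrac12\le|N(y)|\le 2$ for every $y\in\Gamma_0$; then $N$ is nowhere tangent to $\Gamma_0$, so $\{\tau_0(y),N(y)\}$ is a basis of $\mathbb R^2$ for each $y$, uniformly in $y$ by compactness of $\Gamma_0$.

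Next I would define $\Phi:\Gamma_0\times(-r_0,r_0)\to\mathbb R^2$ by $\Phi(y,r):=y+rN(y)$ and set $\Gamma_r:=\Phi(\Gamma_0\times\{r\})$. Exactly as in Lemma~\ref{tub}, computing in an arc length parametrization $\gamma_0$ of $\Gamma_0$ gives $d_y\Phi(y,r)=\tau_0(y)+r\,d_yN(y)$ and $d_r\Phi(y,r)=N(y)$. Since $d_yN$ is bounded ($N$ smooth, $\Gamma_0$ compact) and $\{\tau_0(y),N(y)\}$ is a uniform basis, one can choose $r_0>0$ with $1/r_0>\|\kappa_0\|_\infty$ and small enough that $d\Phi(y,r)$ remains invertible for all $y\in\Gamma_0$ and $|r|<r_0$; the inverse function theorem then yields local invertibility of $\Phi$. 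Shrinking $r_0$ further — the same Lebesgue-number argument used in Lemma~\ref{tub}, now exploiting $|\Phi(y,r)-y|\le 2r_0$ — upgrades this to global injectivity. Hence $\Phi$ is a diffeomorphism of $\Gamma_0\times(-r_0,r_0)$ onto its image $\bigcup_{|r|<r_0}\Gamma_r$, which, being the image of an open set under a local diffeomorphism, is open and clearly contains $\Gamma_0=\Phi(\Gamma_0\times\{0\})$.

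Finally I would check that $\Gamma_r\in\mathcal M^{2+\alpha}$ for each $|r|<r_0$. The map $\Phi(\cdot,r)=\mathrm{id}_{\Gamma_0}+rN$ is an injective immersion from the compact manifold $\Gamma_0$, hence an embedding; moreover $\gamma_0\in buc^{2+\alpha}$ and $N\circ\gamma_0\in buc^{2+\alpha}$ (composition of a smooth map with a $buc^{2+\alpha}$ one), so $s\mapsto\gamma_0(s)+rN(\gamma_0(s))$ is a regular $buc^{2+\alpha}$-parametrization of $\Gamma_r$. Thus each leaf $\Gamma_r$ is a closed curve of class $buc^{2+\alpha}$, as required.

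I expect the only genuine point — rather than an obstacle — to be the first step, namely the replacement of $\nu_0$ by the smooth field $N$: this is possible precisely because $\nu_0$ lives in the \emph{little} Hölder space $buc^{1+\alpha}$, in which smooth functions are dense. Once such an $N$ is in hand, the remainder is a verbatim repetition of the proof of Lemma~\ref{tub}, with the regularity of the foliating curves $\Gamma_r$ now inherited from $N$ instead of from $\nu_0$.
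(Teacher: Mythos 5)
Your proof is correct and establishes the lemma, but it departs from the paper's construction in a way that is worth flagging. You replace $\nu_0$ by a smooth vector field $N$ defined \emph{only on} $\Gamma_0$ (via mollification in arc-length charts) and then use the affine map $\Phi(y,r)=y+rN(y)$, so your foliation consists of straight-line segments emanating from $\Gamma_0$. The paper instead extends $\nu_0$ to a vector field on all of $T_{\tilde r_0}(\Gamma_0)$ via the foot-point projection $y(x)$, cuts off to obtain a global $buc^{1+\alpha}$ field $\tilde\nu$ on $\mathbb{R}^2$, mollifies \emph{on $\mathbb{R}^2$} to get $\nu^\delta\in\operatorname{BUC}^\infty(\mathbb{R}^2,\mathbb{R}^2)$, and takes $\varphi^\delta$ to be the flow of the ODE $\dot x=\nu^\delta(x)$, so the foliating trajectories are in general curved. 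For the statement of this lemma both routes deliver a $buc^{2+\alpha}$ diffeomorphism onto a tubular neighborhood, and yours is arguably slightly more elementary. The reason the paper takes the more elaborate route is that the global $C^\infty$ field $\nu^\delta$ on $\mathbb{R}^2$, not merely a field along $\Gamma_0$, is what the linearization computations of Section~\ref{Sec3} rely on: there one needs $\nu^\delta_r(y)=\nu^\delta(\varphi^\delta_r(y))$ at points off $\Gamma_0$, and full (not just tangential) derivatives such as $d_y\nu^\delta[\tau_0]$ and $d_y\nu^\delta[\nu_0]$ appearing in Lemmas~\ref{normvar} and~\ref{l32}. Your $N\in C^\infty(\Gamma_0,\mathbb{R}^2)$ does not directly furnish these, so if one were to carry your construction forward one would still have to produce a smooth ambient extension, which is essentially what the paper's proof does from the start.
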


\begin{proof}
By the preceding lemma, there exists $\tilde r_0$ such that, for
  each $x\in T_{\tilde r_0}(\Gamma_0)$, there is a unique $(y,r)=\bigl( 
 y(x),r(x) \bigr)\in \Gamma_0\times (-\tilde r_0,\tilde r_0)$
with $x=y+r\nu_0(y)$. In $T_{\tilde r_0}(\Gamma_0)$ define the field
$$
 \tilde{\tilde{\nu}}(x):=\nu_0 \bigl( y(x)\bigr),\quad x\in T_{\tilde r_0}(\Gamma_0),
$$
take a smooth cutoff function $\eta:\mathbb{R}\to \mathbb{R}$ with
$$
 0\leq\eta\leq 1,\quad \eta \big |_{[-\tilde r_0/2,\tilde r_0/2]}\equiv
 1, \quad\text{ and }\ \eta \big |_{(-3\tilde r_0/4,3\tilde r_0/4)^\mathsf{c}}\equiv
 0,
$$
and set
$$
 \tilde \nu(x):=
 \begin{cases}
   \tilde{\tilde{\nu}}(x)\eta \bigl( r(x)\bigr) ,&x\in T_{\tilde r_0}(\Gamma_0),\\
   0,&x\notin T_{\tilde r_0}(\Gamma_0).
 \end{cases}
$$
Then $\tilde\nu\in buc^{1+\alpha}(\mathbb{R}^2,\mathbb{R}^2)$ is a
global vector field. Now take a compactly supported smooth mollifier
$\psi_\delta$ and define
$$
 \nu^\delta :=\psi_\delta *\tilde\nu
$$
componentwise. It follows that $\nu^\delta\in
\operatorname{BUC}^\infty(\mathbb{R}^2,\mathbb{R}^2)$ and that
\begin{equation}\label{normreg}
 \nu^\delta \longrightarrow \nu_0\circ y\ \text{ in } \ buc^{1+\alpha}\bigl(
 T_{\tilde r_0/2}(\Gamma_0)\bigr)
\end{equation}
as $\delta\to 0$. In particular it holds that
$$
 |\nu^\delta (y)\cdot\tau_0(y)|\leq c(\delta)<1,\: y\in \Gamma_0,
$$
where $c(\delta)\to 0$ as $\delta\to 0$.
The
vector field is therefore uniformly transversal to $\Gamma_0$ for
$\delta>0$ fixed small enough. Let then
$\varphi^\delta=\varphi^\delta(y,r)$ be the flow generated by the ODE
$$\begin{cases}
 \dot x = \frac{d}{dr} x=\nu^\delta(x),&\\
 x(0)=y\in \Gamma_0.&
\end{cases}
$$  
It is easily seen that there is $r_0>0$ such that $\varphi^\delta$ is
defined on $\Gamma_0\times (-r_0,r_0)$ with
$$
  \Gamma_r:= \varphi^\delta (\Gamma_0,r)\subset T_{\tilde
   r_0}(\Gamma_0),\ |r|<r_0
$$
and standard ODE arguments yield that
$$
 \varphi ^\delta :\Gamma_0\times (-r_0,r_0)\longrightarrow
 \bigcup_{|r|<r_0}\Gamma_r 
$$
is a diffeomorphism of class $buc^{2+\alpha}$.
\end{proof}
In the following we use the notation introduced in the proof of
Lemma~\ref{gentub} for a fixed $\Gamma_0\in \mathcal{M}^{2+\alpha}$,
that is, having $\delta>0$ chosen small enough we let $\varphi^\delta:
\Gamma_0\times (-r_0,r_0)\rightarrow \mathbb{R}^2$ denote the flow
induced by the vector field $\nu^\delta{\in
\operatorname{BUC}^\infty(\mathbb{R}^2,\mathbb{R}^2)}$ transversal to $\Gamma_0$. It
is convenient to define 
$$
\varphi^\delta_r(y):=\varphi^\delta(y,r)
,\qquad \nu^\delta_r(y):=\nu^\delta\big(\varphi^\delta_r(y)\big)
$$
for $(y,r)\in \Gamma_0\times (-r_0,r_0)$, hence 
\begin{equation}\label{flow}
\frac{d}{dr} \varphi^\delta_r(y) = \nu^\delta_r(y), \quad (y,r)\in
\Gamma_0\times (-r_0,r_0). 
\end{equation}
Note that $\nu^\delta_0=\nu^\delta$ since
$\varphi^\delta_0=\operatorname{id}$.
The previous lemma provides coordinates $(y,r)$  for a neighborhood of
$\Gamma_0$, which can be denoted by $T^{\nu^\delta}_{r_0}(\Gamma_0)$
since it is constructed based on $\nu ^\delta$. Explicitly this means
\begin{equation}\label{coord}
 \forall\, x\in T^{\nu^\delta}_{r_0}(\Gamma_0)\ \exists!\, (y,r)\in
 \Gamma_0\times(-r_0,r_0)\text{ such that }
 x=\varphi^\delta_r(y).
\end{equation}
We show next that curves close to $\Gamma_0$ can be conveniently
parametrized in these coordinates. 

\begin{lem}\label{L25}
Let $\Gamma_0$ and $ \Gamma\in \mathcal{M}^{2+\alpha}$ be
close in the $\operatorname{BUC}^1$-topology, that is,
  let they satisfy
$$
 d_H \Bigl( \big\{ (\tilde y,\nu_{\Gamma} \bigl( \tilde y)\bigr) \,
 \big |\,\tilde y\in \Gamma\big\} ,\big\{ \bigl( y,\nu_0(y)\bigr) \,
 \big |\, y\in \Gamma_0\big\} \Bigr) <<1, 
$$
where $d_H$ denotes the Haussdorff distance between compact sets. Then
there is a unique function $\rho\in
buc^{2+\alpha}(\Gamma_0)$ such that
$$
 \Gamma=\big\{ \varphi^\delta \bigl( y,\rho(y)\bigr) \, \big |\, y\in
 \Gamma_0\big\}= \varphi^\delta _\rho(\Gamma_0).
$$
\end{lem}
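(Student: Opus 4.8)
The plan is to set up, for each point $y\in\Gamma_0$, a scalar equation whose unique solution $\rho(y)$ will give the sought parametrization, and then to upgrade the pointwise solvability to a statement about regularity and uniqueness in $buc^{2+\alpha}(\Gamma_0)$. Concretely, recall from \eqref{coord} that every $x$ in the coordinate neighborhood $T^{\nu^\delta}_{r_0}(\Gamma_0)$ is uniquely of the form $x=\varphi^\delta_r(y)$ with $(y,r)\in\Gamma_0\times(-r_0,r_0)$; write $(Y(x),R(x))$ for this inverse coordinate map, which is of class $buc^{2+\alpha}$ by the diffeomorphism statement of Lemma~\ref{gentub}. First I would note that the $\operatorname{BUC}^1$-closeness hypothesis forces $\Gamma$ to lie inside $T^{\nu^\delta}_{r_0}(\Gamma_0)$ (shrinking $r_0$ if necessary): indeed if $\Gamma$ is Hausdorff-close to $\Gamma_0$ as a set, it is contained in a thin tube, and the transversality of $\nu^\delta$ to $\Gamma_0$ (with $|\nu^\delta_0\cdot\tau_0|\le c(\delta)<1$) guarantees that the flow coordinates cover that tube. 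Thus the well-defined map $\Gamma\ni\tilde y\mapsto Y(\tilde y)\in\Gamma_0$ makes sense, and I claim it is a $buc^{2+\alpha}$-diffeomorphism onto $\Gamma_0$; once that is established, its inverse composed with $R$ gives the candidate $\rho:=R\circ(Y|_\Gamma)^{-1}\in buc^{2+\alpha}(\Gamma_0)$, and by construction $\varphi^\delta_{\rho(y)}(y)\in\Gamma$ with $Y(\varphi^\delta_{\rho(y)}(y))=y$, so that $\varphi^\delta_\rho(\Gamma_0)=\Gamma$.

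The heart of the matter is therefore showing that $Y|_\Gamma:\Gamma\to\Gamma_0$ is a bijection of the claimed regularity. For injectivity: if $Y(\tilde y_1)=Y(\tilde y_2)=y$ then both $\tilde y_1$ and $\tilde y_2$ lie on the single flow line $r\mapsto\varphi^\delta_r(y)$; since this flow line is a curve transversal to $\Gamma_0$ and $\Gamma$ is $C^1$-close to $\Gamma_0$, the line meets $\Gamma$ exactly once for $r$ small — this is where the $\operatorname{BUC}^1$-closeness (not merely $C^0$-closeness) is essential, as it controls the angle between $\Gamma$ and the flow lines and rules out tangential double intersections. For surjectivity and the diffeomorphism property I would argue via the inverse function theorem on manifolds: $Y|_\Gamma$ is $buc^{2+\alpha}$ because it is the restriction of the $buc^{2+\alpha}$ map $Y$ to the $buc^{2+\alpha}$ submanifold $\Gamma$; its differential at $\tilde y$ is an isomorphism $T_{\tilde y}\Gamma\to T_y\Gamma_0$ precisely because $T_{\tilde y}\Gamma$ is transversal to the flow direction $\nu^\delta$ (again a consequence of $C^1$-closeness), so it is a local diffeomorphism, and being a local diffeomorphism that is also injective between compact connected $1$-manifolds it is a global diffeomorphism. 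That $\rho$ inherits $buc^{2+\alpha}$ regularity rather than merely $\operatorname{BUC}^{2+\alpha}$ follows since all the maps involved ($Y$, $R$, the chart transitions, the parametrization of $\Gamma$) lie in the little-Hölder scale.

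Uniqueness of $\rho$ is then immediate: if $\varphi^\delta_{\rho_1}(\Gamma_0)=\varphi^\delta_{\rho_2}(\Gamma_0)=\Gamma$, then for each $y$ the points $\varphi^\delta_{\rho_1(y)}(y)$ and $\varphi^\delta_{\rho_2(y)}(y)$ both lie on $\Gamma$ and on the same flow line through $y$; by the single-intersection property just established, $\rho_1(y)=\rho_2(y)$. I expect the main obstacle to be the careful bookkeeping that converts the informal phrase ``close in the $\operatorname{BUC}^1$-topology'' into the quantitative transversality estimate that makes each flow line cross $\Gamma$ exactly once — i.e. choosing $r_0$ (depending on $\delta$, $\|\kappa_0\|_\infty$, and the closeness constant) so that the angle between $\Gamma$'s tangent and $\nu^\delta$ stays bounded away from $\pi/2$ throughout the tube; everything after that is a routine application of the inverse function theorem and composition of little-Hölder maps.
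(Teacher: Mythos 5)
The paper itself gives no proof of Lemma~\ref{L25}; it only refers the reader to \cite[Lemma~2.6]{PG2}, so there is no in-paper argument against which to compare line by line. Your proposal is, however, the natural and essentially correct route: pass to the inverse coordinate map $(Y,R)$ of the $buc^{2+\alpha}$-diffeomorphism from Lemma~\ref{gentub}, show $\Gamma\subset T^{\nu^\delta}_{r_0}(\Gamma_0)$, prove that $Y|_\Gamma:\Gamma\to\Gamma_0$ is a $buc^{2+\alpha}$-diffeomorphism, and set $\rho:=R\circ(Y|_\Gamma)^{-1}$; uniqueness then falls out of the single-crossing property of each flow line. The one place your write-up stays at the level of a sketch is the injectivity claim that each flow line meets $\Gamma$ exactly once. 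Transversality of $\nu^\delta$ to $\Gamma$ at an intersection point does not by itself forbid a second crossing, and ``rules out tangential double intersections'' is not quite the right formulation. The clean way to close this is to observe that along the flow line $r\mapsto\varphi^\delta_r(y)$ the signed distance to $\Gamma$ has derivative $\nu^\delta_r(y)\cdot\nu_\Gamma$ (normal extended off $\Gamma$ by nearest-point projection), and the $\operatorname{BUC}^1$-closeness together with \eqref{normreg} keeps this quantity uniformly close to $1$ throughout the tube, so the signed distance is strictly monotone in $r$ and vanishes exactly once. You flag this yourself as the ``careful bookkeeping,'' so it is an acknowledged loose end rather than a wrong turn; with that monotonicity argument inserted, the proof is complete.
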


\begin{proof}
We refer to \cite[Lemma 2.6]{PG2} for a complete proof.
\end{proof}

\section{The Equation for $\Gamma(t)$ and its Linearization}\label{Sec3}

We now focus on equations \eqref{d4}--\eqref{d5} for  a given
simply connected domain  {$\Omega^0$} of class $buc^{2+\alpha}$ with
$\alpha\in (0,1)$, that is, $\Gamma_0:=\partial{ \Omega^0}\in \mathcal{M}^{2+\alpha}$. 
Using the corresponding coordinates introduced in \eqref{coord} {with $\delta>0$ small enough}, it is possible to reduce the
evolution for $\Gamma(t)$ to one for the function
$\rho(t,\cdot):\Gamma_0\to \mathbb{R}$ given in Lemma~\ref{L25} through
$$
 \Gamma(t)=\big\{ \varphi^\delta_{\rho(t,y)}(y)\, \big |\, y\in
 \Gamma_0\big\}= \varphi^\delta_{\rho(t,\cdot)}(\Gamma_0)=:
 \Gamma_{\rho(t)}.
$$
Denote the unit tangent and normal vectors to $\Gamma_{\rho(t)}$ at
the point $\varphi^\delta_{\rho(t,y)}(y)$ by $\tau_{\rho(t)}(y)$ and
$\nu_{\rho(t)}(y)$, respectively. Then \eqref{flow} implies
$$
\frac{d}{dt}\varphi^\delta_{\rho(t,y)} ( y)=  \nu^\delta_{\rho(t,y)}
(y)\dot \rho(t,y), \quad y\in \Gamma_0, 
$$
with superscript dot indicating a derivative with respect to
time. Since the normal velocity $V_{\Gamma_{\rho(t)}}$ of
$\Gamma_{\rho(t)}$ at a point $\varphi^\delta_{\rho(t,y)} ( y)$ is
given by the component of the projection of the tangent vector to the
curve $t\mapsto\varphi^\delta_{\rho(t,y)}(y)$ onto the unit normal
vector $\nu_{\rho(t)}(y)$ at that point, it holds that 
$$
V_{\Gamma_{\rho(t)}}=\big( \nu^\delta_{\rho(t,y)}(y) \big|
\nu_{\rho(t)}(y)\big)\dot\rho(t,y) \quad\text{ for }\quad
\varphi^\delta_{\rho(t,y)} (y)\in \Gamma(t) \text{ with } y\in
\Gamma_0. 
$$ 
To keep notation simple we often omit the time
variable. Therefore, \eqref{d4}--\eqref{d5} is equivalent to the
evolution equation for $\rho$,
\begin{equation}\label{rhoev}\begin{cases}
  \dot\rho=\bigl( \nu^\delta_\rho \big|\nu_\rho
  \bigr)^{-1}F\big(-\partial _{\nu_\rho}u_\rho \big)=:G(\rho),\quad
  t>0, &\\ \rho(0,\cdot)\equiv 0,&\end{cases}
\end{equation}
where $(u_\rho,\lambda_\rho)$ denotes the  solution of
\eqref{d1}--\eqref{d3} from Proposition~\ref{P1} in 
$\Omega_\rho$, the domain bounded by $\Gamma_\rho$, {and a given fixed
  $\mu\ge 0$ (note that $\rho\equiv 0$ gives the initial domain, that
  is, $\Omega_0=\Omega^0$)}. This is a
nonlinear, nonlocal equation for $\rho:[0,\infty)\times \Gamma_0\to
\mathbb{R}$. Notice that, for $\delta<<1$ and $|\rho|<<1$, the factor
of $\dot\rho$ in the expression for $V_{\Gamma_{\rho}}$ satisfies
$$
 \nu^\delta_\rho \cdot\nu_\rho \simeq \nu_0\cdot\nu_0=1.
$$
In order to obtain local well-posedness for \eqref{rhoev} using
maximal regularity techniques, its linearization at the initial
datum $\rho_0\equiv 0$ is computed. For this we first note 

\begin{prop}\label{P2}
There exists an open zero-neighborhood $\mathcal{O}$ in
$buc^{2+\alpha}(\Gamma_0)$ such that 
$$
 G\in C^2(\mathcal{O},buc^{1+\alpha}(\Gamma_0)).
$$
\end{prop}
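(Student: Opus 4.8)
The plan is to establish the $C^2$-regularity of $G$ by decomposing it into a composition of maps, each of which is manifestly smooth or at least $C^2$, and then invoking the chain rule. The key observation is that $G(\rho)$ depends on $\rho$ through three ingredients: the geometry of the curve $\Gamma_\rho$ (encoded in the normal $\nu_\rho$ and the pull-back of differential operators to $\Gamma_0$), the solution operator $f\mapsto u^{(f)}$ for the Dirichlet problem on the varying domain $\Omega_\rho$, and the Nemytskii operator induced by the nonlinearity $F$. First I would fix the small parameter $\delta>0$ once and for all so that $\varphi^\delta$ is a $buc^{2+\alpha}$-diffeomorphism as in Lemma~\ref{gentub}, and I would restrict to an open zero-neighborhood $\mathcal O\subset buc^{2+\alpha}(\Gamma_0)$ on which $\|\rho\|_{buc^{2+\alpha}}<r_0$ so that all the constructions of Section~\ref{Sec2} apply uniformly.

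The main steps, in order, would be: (i) Show that the map $\rho\mapsto\Theta_\rho$, where $\Theta_\rho:=\varphi^\delta_{\rho(\cdot)}:\Gamma_0\to\Gamma_\rho$ is the parametrization, is real-analytic (or at least $C^2$) from $\mathcal O$ into $buc^{2+\alpha}(\Gamma_0,\mathbb R^2)$; this follows from the $\operatorname{BUC}^\infty$-regularity of $\nu^\delta$ together with smooth dependence of ODE flows on parameters, since $\varphi^\delta_r(y)$ is smooth in $(y,r)$ and composition with $\rho\in buc^{2+\alpha}(\Gamma_0)$ loses no regularity when substituting into the second argument. From $\Theta_\rho$ one obtains the induced metric, the tangent and normal vectors $\tau_\rho$, $\nu_\rho$, and hence the scalar factor $(\nu^\delta_\rho\,|\,\nu_\rho)$, all as $C^2$ maps of $\rho$ into $buc^{1+\alpha}(\Gamma_0)$ (one derivative of $\Theta_\rho$ enters, so $buc^{1+\alpha}$ is the natural target), and the factor $(\nu^\delta_\rho\,|\,\nu_\rho)^{-1}$ stays in that class because it is bounded away from zero on $\mathcal O$ and inversion in a Banach algebra is smooth. (ii) Handle the elliptic solve: pulling the equation $-\Delta u=\mu x^1+\lambda$ on $\Omega_\rho$ back to the fixed domain $\Omega_0$ via a $buc^{2+\alpha}$-diffeomorphism $\Psi_\rho$ extending $\Theta_\rho$ to the interior, one obtains an elliptic problem on $\Omega_0$ with coefficients depending in a $C^2$ (in fact better) fashion on $\rho$; since these coefficients are $buc^{\alpha}$ and the operator stays uniformly elliptic on $\mathcal O$, the isomorphism $\bigl(\text{operator},\text{trace}\bigr)$ depends $C^2$ on $\rho$, and inverting it (again smoothness of Banach-space inversion) shows $\rho\mapsto u_\rho\in buc^{2+\alpha}(\Omega_0)$ is $C^2$, where we must also track the $\rho$-dependence of $\lambda_\rho$ through formula \eqref{lambda}, which involves only integrals of the $u^{(f)}$ over $\Omega_\rho$ and division by a quantity bounded below. (iii) Take the normal derivative: $-\partial_{\nu_\rho}u_\rho$, expressed on $\Gamma_0$, is a bilinear-type expression in the first-order data of $u_\rho$ (trace of $\nabla u$, which lies in $buc^{1+\alpha}$) and $\nu_\rho\in buc^{1+\alpha}$, hence a $C^2$ map $\mathcal O\to buc^{1+\alpha}(\Gamma_0)$. (iv) Finally, compose with $F$: since $F\in C^4$ by \eqref{F}, the Nemytskii operator $w\mapsto F\circ w$ is $C^2$ (indeed $C^3$) on $buc^{1+\alpha}(\Gamma_0)$, mapping into $buc^{1+\alpha}(\Gamma_0)$. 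Assembling (i)--(iv) via the chain rule and the fact that multiplication $buc^{1+\alpha}\times buc^{1+\alpha}\to buc^{1+\alpha}$ is continuous bilinear (hence smooth) yields $G\in C^2(\mathcal O,buc^{1+\alpha}(\Gamma_0))$.

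The main obstacle I expect is step (ii), the smooth dependence of the Dirichlet solution operator on the domain. The delicate points are: choosing the interior extension $\Psi_\rho$ of the boundary parametrization in a way that is itself $C^2$ in $\rho$ and keeps the transformed operator uniformly elliptic with $buc^{\alpha}$-coefficients (one cannot afford to lose Hölder regularity of the coefficients, as Schauder theory requires it); verifying that the transformed right-hand side $\mu(x^1\circ\Psi_\rho)+\lambda_\rho$ together with the homogeneous boundary trace sits in the right spaces and depends $C^2$ on $\rho$; and controlling the nonlocal Lagrange-multiplier term $\lambda_\rho$, whose denominator $\int_{\Omega_\rho}u^{(1)}_\rho\,dx$ must be shown to stay bounded away from zero on $\mathcal O$ (which follows by continuity from its positive value at $\rho=0$, after shrinking $\mathcal O$). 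Everything else is a routine, if lengthy, bookkeeping exercise in transformation formulae and the standard calculus of Nemytskii operators and Banach-space inversion; I would relegate those verifications to citations of the analogous computations in \cite{PG2} and to standard references on elliptic regularity in little-Hölder spaces, and concentrate the written proof on setting up the composition structure and checking the target regularity $buc^{1+\alpha}$ at each stage.
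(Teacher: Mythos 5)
Your proposal is correct and takes essentially the same route as the paper: the paper's own proof is a one-line pointer to the smoothness of the flow $\varphi^\delta$, the resulting smooth dependence of $(u_\rho,\lambda_\rho)$ on $\rho$, and assumption \eqref{F}, with the technical details delegated to \cite[Theorem 3.6]{PG2}; your steps (i)--(iv) simply spell out that delegated argument (composition structure, flow-based change of variables to the fixed domain, elliptic regularity in little-H\"older spaces, and Nemytskii calculus for $F$), and you correctly flag the $\lambda_\rho$-denominator and the coefficient regularity as the points to watch.
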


\begin{proof}
This follows from \eqref{F} and the fact that the flow
$\varphi_r^\delta=\varphi^\delta(\cdot,r)$ is smooth with respect $r$
which implies that the solution  $(u_\rho,\lambda_\rho)$ of
\eqref{d1}--\eqref{d3} depends smoothly on $\rho$ since
$\Gamma_\rho=\varphi^\delta_\rho(\Gamma_0)$. We refer to \cite[Theorem
3.6]{PG2} for a more detailed and explicit calculation of this
derivative which yields the desired smoothness.
\end{proof}
We next verify that
$$
 DG(0)\in \mathcal{L} \bigl( buc^{2+\alpha}(\Gamma_0),
 buc^{1+\alpha}(\Gamma_0)\bigr)
$$
is the generator of an analytic semigroup. Observe that 
\begin{align}
 DG(0)h& =\left.\frac{d}{d\epsilon}\right|_{\epsilon=0} G(\epsilon h)\nonumber\\
&=-\frac{1}{\nu^\delta\cdot
  \nu_0}F'(-\partial_{\nu_0}u_0)\Big\{\left.\frac{d}{d\epsilon}\right|_{\epsilon=0}
 \partial_{\nu_{\epsilon h}}u_{\epsilon h}\Big\} \nonumber\\
&
 \quad-\frac{1}{(\nu^\delta\cdot \nu_0)^2}F(-\partial_{\nu_0}u_0)
 \Big\{\left.\frac{d}{d\epsilon}\right|_{\epsilon=0} 
 \bigl(\nu^\delta_{\epsilon h}\cdot \nu_{\epsilon h}\bigr)\Big\} \label{DG}
\end{align}
for $h\in buc^{2+\alpha}(\Gamma_0)$ since
$\nu^\delta_0=\nu^\delta$. It then remains to compute the derivatives
in the curly brackets. This is where the choice of coordinate system
from Lemma \ref{gentub} delivers its benefits yielding a particularly
insightful representation. 

\begin{lem}\label{normvar} It holds that
\begin{align*}
  \left.\frac{d}{d\epsilon}\right|_{\epsilon=0}  \nu_{\epsilon
   h}  = - \Big[ h\big( d_y\nu^\delta  [\tau_0] \big| \nu_0\big) +h'
  \big(\nu^\delta  \big|  \nu_0\big)\Big] \tau_0 
\end{align*}
{for $h\in buc^{2+\alpha}(\Gamma_0)$}, where $'$ denotes
differentiation with respect to arc length.
\end{lem}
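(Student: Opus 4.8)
The plan is to reduce everything to the tangent vector of $\Gamma_{\epsilon h}$, since in the plane the outward unit normal is determined by the unit tangent (rotation by $\pi/2$, with the sign fixed by orientation). Concretely, let $\gamma_0=\gamma_0(s)$ be an arc-length parametrization of $\Gamma_0$, so $\gamma_0'=\tau_0$, and parametrize $\Gamma_{\epsilon h}$ — which lies in the chart of Lemma~\ref{gentub} for $|\epsilon|$ small — by
\[
 \Psi_\epsilon(s):=\varphi^\delta\bigl(\gamma_0(s),\epsilon\,h(\gamma_0(s))\bigr).
\]
First I would differentiate in $s$ and use the flow equation \eqref{flow}, i.e.\ $\partial_r\varphi^\delta_r(y)=\nu^\delta(\varphi^\delta_r(y))$, to obtain the (un-normalized) tangent field
\[
 T_\epsilon:=\partial_s\Psi_\epsilon
 = d_y\varphi^\delta_{\epsilon h}[\tau_0]+\epsilon\,h'\,\nu^\delta_{\epsilon h}
\]
along $\Gamma_0$, where $'$ is the arc-length derivative. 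Since $\varphi^\delta_0=\operatorname{id}$ we have $T_0=\tau_0$, $|T_0|=1$, and $\nu_{\epsilon h}$ is the unit vector orthogonal to $\tau_{\epsilon h}:=T_\epsilon/|T_\epsilon|$ with the outward orientation (well defined for $|\epsilon|$ small).

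Next I would reduce the statement to a scalar identity. From $|\nu_{\epsilon h}|\equiv1$ one gets $\bigl(\tfrac{d}{d\epsilon}\big|_0\nu_{\epsilon h}\,\big|\,\nu_0\bigr)=0$, so $\tfrac{d}{d\epsilon}\big|_0\nu_{\epsilon h}=c\,\tau_0$ for some scalar $c$. Differentiating the orthonormality relation $\bigl(\nu_{\epsilon h}\,\big|\,\tau_{\epsilon h}\bigr)=0$ at $\epsilon=0$, and inserting $\tfrac{d}{d\epsilon}\big|_0\tau_{\epsilon h}=W-(W\,|\,\tau_0)\,\tau_0$ with $W:=\tfrac{d}{d\epsilon}\big|_0 T_\epsilon$ (this uses $|T_0|=1$), yields $c=-(W\,|\,\nu_0)$. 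Hence the lemma follows once $W$ is computed and paired with $\nu_0$.

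To compute $W$, I would differentiate the formula for $T_\epsilon$ termwise at $\epsilon=0$. The term $\epsilon\,h'\,\nu^\delta_{\epsilon h}$ contributes $h'\,\nu^\delta$ because $\nu^\delta_0=\nu^\delta$. For $d_y\varphi^\delta_{\epsilon h}[\tau_0]$ the key point is the identity
\[
 \partial_r\big|_{r=0}\,d_y\varphi^\delta_r(y)=d\nu^\delta(y),
\]
obtained by differentiating the flow ODE in $y$ along $\Gamma_0$, i.e.\ $\partial_r\,d_y\varphi^\delta_r=d\nu^\delta(\varphi^\delta_r)\circ d_y\varphi^\delta_r$, and setting $r=0$, where $\varphi^\delta_0=\operatorname{id}$ and $d_y\varphi^\delta_0=\operatorname{id}$; here $d\nu^\delta$ is the Jacobian of the $\operatorname{BUC}^\infty$ field $\nu^\delta$. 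This gives the contribution $h\,d_y\nu^\delta[\tau_0]$, so that
\[
 W=h\,d_y\nu^\delta[\tau_0]+h'\,\nu^\delta\qquad\text{along }\Gamma_0,
\]
and taking the inner product with $\nu_0$ produces exactly
\[
 \frac{d}{d\epsilon}\Big|_{\epsilon=0}\nu_{\epsilon h}
 =-\Big[h\bigl(d_y\nu^\delta[\tau_0]\,\big|\,\nu_0\bigr)+h'\bigl(\nu^\delta\,\big|\,\nu_0\bigr)\Big]\tau_0 .
\]

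The hard part is not any single computation but the bookkeeping of the three derivatives in play — in the variation parameter $\epsilon$, in the flow parameter $r$, and in arc length $s$ — together with the interchange $\partial_r\,d_y=d_y\,\partial_r$ used to obtain $\partial_r|_0 d_y\varphi^\delta_r=d\nu^\delta$; this interchange is legitimate precisely because $\varphi^\delta$ is a $C^\infty$ flow (Lemma~\ref{gentub}). One also has to notice that normalizing $T_\epsilon$ introduces a $\nu_0$-component into $\tfrac{d}{d\epsilon}|_0\tau_{\epsilon h}$, which is annihilated when paired with $\tau_0$ in the scalar reduction; this is why the variation of the normal turns out to be purely tangential, as the statement asserts. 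The remaining steps are routine applications of the chain and product rules.
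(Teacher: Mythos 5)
Your proof is correct and follows essentially the same route as the paper: you parametrize $\Gamma_{\epsilon h}$ via the flow, obtain the unnormalized tangent $T_\epsilon$ (the paper's $\tilde\tau_{\epsilon h}$), use the orthonormality relations to reduce the variation of $\nu$ to a scalar coefficient along $\tau_0$, and compute $\frac{d}{d\epsilon}\big|_0 T_\epsilon$ by interchanging $\partial_r$ and $d_y$. Your phrasing of the interchange through the variational equation for the flow ODE is a slightly more explicit justification of the same step the paper performs, but the decomposition and computations coincide.
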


\begin{proof}
It follows from $\tau_\rho\cdot\nu_\rho\equiv 0$ and
$\nu_\rho\cdot\nu_\rho\equiv 1$ that
$$ 
 \bigl( \frac{d}{d \epsilon }\tau_{\epsilon h}\bigr) \cdot
 \nu_{\epsilon h}=-\tau_{\epsilon h}\cdot \bigl( \frac{d}{d \epsilon }
 \nu_{\epsilon h}\bigr), \quad \bigl(\frac{d}{d \epsilon}\nu_{\epsilon
   h}\bigr)\cdot\nu_{\epsilon 
h}=0.  
$$
This implies that
\begin{equation}\label{x0}
 \left.\frac{d}{d\epsilon}\right|_{\epsilon=0}  \nu_{\epsilon
   h}  =-\big[\bigl( \left.\frac{d}{d\epsilon}\right|_{\epsilon=0}
 \tau_{\epsilon h} \bigr)\cdot \nu_0\big]\tau_0.
\end{equation}
It is therefore enough to compute
$\left.\frac{d}{d\epsilon}\right|_{\epsilon=0} \tau_{\epsilon
  h}$. Now, since $\gamma_\rho(s):=\varphi^\delta \Bigl(
\gamma_0(s),\rho\bigl(\gamma_0(s)\bigr)\Bigr)$ is a parametrization of
$\Gamma_\rho$ whenever $\gamma_0$ is an arc length parametrization of
$\Gamma_0$, one has from \eqref{flow} that
$$
\tilde\tau_{\epsilon h}:=d_y\varphi^\delta(\cdot,\epsilon
h)[\tau_0]+\epsilon  h' \nu^\delta_{\epsilon h}
$$
is a tangent vector to $\Gamma_{\epsilon h}$ and thus that
$\tau_{\epsilon h} ={\tilde\tau_{\epsilon h}}/{|\tilde\tau_{\epsilon
    h}|}$. The latter yields
\begin{align}
 \left.\frac{d}{d\epsilon}\right|_{\epsilon=0}\tau_{\epsilon h}
 &= \left.\frac{d}{d\epsilon}\right|_{\epsilon=0}
   \frac{\tilde\tau_{\epsilon h}}{|\tilde\tau_{\epsilon
   h}|}=\frac{1}{|\tilde\tau_0|}\left. 
 \frac{d}{d\epsilon}\right|_{\epsilon=0} \tilde\tau_{\epsilon h}-
 \frac{1}{|\tilde\tau_0|^3}\big[ \tilde\tau_0\cdot \bigl(\left.
 \frac{d}{d\epsilon}\right|_{\epsilon=0} \tilde\tau_{\epsilon h}
 \bigr)\big]\tilde\tau_0 \nonumber
\\&=\left. \frac{1}{|\tilde\tau_0|}\frac{d}{d\epsilon}\right|_{\epsilon=0}
 \tilde\tau_{\epsilon h}-\frac{1}{|\tilde\tau_0|} \big[ \tau_0\cdot \bigl(\left.
 \frac{d}{d\epsilon}\right|_{\epsilon=0} \tilde\tau_{\epsilon h}
 \bigr)\big]\tau_0. \label{x1}
\end{align}
Then, using \eqref{flow} again,
\begin{align}
\left.\frac{d}{d\epsilon}\right|_{\epsilon=0} \tilde\tau_{\epsilon
  h}&=d_y\big(\frac{d}{dr}\varphi^\delta(\cdot,0)\big)[\tau_0]h+h'\nu^\delta_0 \nonumber\\
  &=hd_y\nu^\delta _0 [\tau_0]+h' \nu^\delta _0.\label{x3}
\end{align}
Combining \eqref{x0}--\eqref{x3} and noticing
  $\tilde\tau_0=d_y\varphi^\delta(\cdot,0)[\tau_0] =\tau_0$ since
  $\varphi^\delta_0=\operatorname{id}_{\Gamma_0}$,  we get 
\begin{align*}
\left.\frac{d}{d\epsilon}\right|_{\epsilon=0} \nu_{\epsilon h}  & =- 
\big[\bigl( \left.\frac{d}{d\epsilon}\right|_{\epsilon=0}
 \tilde\tau_{\epsilon h}\bigr)\cdot \nu_0\big]\tau_0\\
& = - \big[ h\big(d_y\nu^\delta _0 [\tau_0] \big| \nu_0\big) +h'
  \big(\nu^\delta _0 \big| \nu_0\big)\big] \tau_0 
\end{align*}
and the claim follows from $\nu^\delta_0=\nu^\delta$.
\end{proof}

\begin{lem}\label{l32}
It holds that
\begin{align*}
\left.\frac{d}{d\epsilon}\right|_{\epsilon=0} \big(\nu^\delta_{\epsilon h}\cdot   \nu_{\epsilon
   h} \big) = \Big[ h \big(d_y\nu^\delta[\nu_0]\big| \nu_0\big)
	-  h' \big(\nu^\delta\big| \tau_0\big)\Big] \big(\nu^\delta\big|\nu_0\big) 
\end{align*}
for $h\in buc^{2+\alpha}(\Gamma_0)$.
\end{lem}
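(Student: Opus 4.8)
The plan is to differentiate the product $\nu^\delta_{\epsilon h}\cdot\nu_{\epsilon h}$ by the Leibniz rule, compute the two resulting factors separately, and then reconcile the outcome with the formula from Lemma~\ref{normvar} by expanding $\nu^\delta$ in the orthonormal frame $\{\tau_0,\nu_0\}$ along $\Gamma_0$.

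First I would compute $\left.\frac{d}{d\epsilon}\right|_{\epsilon=0}\nu^\delta_{\epsilon h}$. By definition $\nu^\delta_{\epsilon h}(y)=\nu^\delta\bigl(\varphi^\delta_{\epsilon h(y)}(y)\bigr)$, so the chain rule gives
$$
 \left.\frac{d}{d\epsilon}\right|_{\epsilon=0}\nu^\delta_{\epsilon h}(y)
 =d_y\nu^\delta(y)\Bigl[\left.\tfrac{d}{d\epsilon}\right|_{\epsilon=0}\varphi^\delta_{\epsilon h(y)}(y)\Bigr],
$$
and by \eqref{flow} together with $\varphi^\delta_0=\operatorname{id}$ one has $\left.\frac{d}{d\epsilon}\right|_{\epsilon=0}\varphi^\delta_{\epsilon h(y)}(y)=h(y)\,\nu^\delta_0(y)=h(y)\,\nu^\delta(y)$; hence $\left.\frac{d}{d\epsilon}\right|_{\epsilon=0}\nu^\delta_{\epsilon h}=h\,d_y\nu^\delta[\nu^\delta]$. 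Combining this with the expression for $\left.\frac{d}{d\epsilon}\right|_{\epsilon=0}\nu_{\epsilon h}$ furnished by Lemma~\ref{normvar} and using $\nu^\delta_0=\nu^\delta$, the Leibniz rule yields
\begin{align*}
 \left.\frac{d}{d\epsilon}\right|_{\epsilon=0}\bigl(\nu^\delta_{\epsilon h}\cdot\nu_{\epsilon h}\bigr)
 ={}& h\bigl(d_y\nu^\delta[\nu^\delta]\,\big|\,\nu_0\bigr) \\
 &-\Bigl[h\bigl(d_y\nu^\delta[\tau_0]\,\big|\,\nu_0\bigr)+h'\bigl(\nu^\delta\,\big|\,\nu_0\bigr)\Bigr]\bigl(\nu^\delta\,\big|\,\tau_0\bigr).
\end{align*}

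It then remains to simplify, and this is the step requiring the most care. I would expand $\nu^\delta$ along $\Gamma_0$ in the orthonormal frame, $\nu^\delta=(\nu^\delta|\tau_0)\,\tau_0+(\nu^\delta|\nu_0)\,\nu_0$, so that linearity of $d_y\nu^\delta$ gives $d_y\nu^\delta[\nu^\delta]=(\nu^\delta|\tau_0)\,d_y\nu^\delta[\tau_0]+(\nu^\delta|\nu_0)\,d_y\nu^\delta[\nu_0]$. Substituting this into the first term above, the two contributions involving $\bigl(d_y\nu^\delta[\tau_0]\,\big|\,\nu_0\bigr)$ cancel, and what remains is exactly $\bigl[h\bigl(d_y\nu^\delta[\nu_0]\,\big|\,\nu_0\bigr)-h'\bigl(\nu^\delta\,\big|\,\tau_0\bigr)\bigr]\bigl(\nu^\delta\,\big|\,\nu_0\bigr)$, which is the asserted identity.

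The only subtlety worth flagging is this bookkeeping: one must recognize that the ``tangential'' term $d_y\nu^\delta[\tau_0]$ produced by the variation of the normal $\nu_{\epsilon h}$ in Lemma~\ref{normvar} is precisely what is generated — with opposite sign after pairing with $\nu_0$ — when $d_y\nu^\delta[\nu^\delta]$ is decomposed in the frame $\{\tau_0,\nu_0\}$. Apart from this, one should only keep straight that $d_y\nu^\delta$ here denotes the full spatial differential of the smooth field $\nu^\delta$ on $\mathbb{R}^2$, evaluated along $\Gamma_0$, which is legitimate since $\nu^\delta\in\operatorname{BUC}^\infty(\mathbb{R}^2,\mathbb{R}^2)$, and that $\nu^\delta_0=\nu^\delta$ because $\varphi^\delta_0=\operatorname{id}$.
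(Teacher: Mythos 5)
Your argument is correct and follows the same route as the paper: compute $\left.\frac{d}{d\epsilon}\right|_{\epsilon=0}\nu^\delta_{\epsilon h}=h\,d_y\nu^\delta[\nu^\delta]$ via the chain rule and \eqref{flow}, invoke Lemma~\ref{normvar} for the other factor, and then decompose $\nu^\delta=(\nu^\delta|\tau_0)\tau_0+(\nu^\delta|\nu_0)\nu_0$ to cancel the $\bigl(d_y\nu^\delta[\tau_0]\,|\,\nu_0\bigr)$ contributions. You have merely spelled out the cancellation that the paper's terse ``note that $\nu^\delta-(\nu^\delta|\tau_0)\tau_0=(\nu^\delta|\nu_0)\nu_0$'' leaves to the reader.
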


\begin{proof}
Owing to the previous lemma it only remains to compute
$$
 \left.\frac{d}{d\epsilon}\right|_{\epsilon=0} \nu^\delta_{\epsilon
   h}=\left.\frac{d}{d\epsilon}\right|_{\epsilon=0} \nu^\delta\big(\varphi^\delta(\cdot,\epsilon
   h)\big)= h d_y\nu^\delta[\nu^\delta] 
$$
and to note that 
$$
\nu^\delta-\big(\nu^\delta\big|\tau_0\big)\tau_0 =\big(\nu^\delta\big|\nu_0\big)\nu_0 .
$$
\end{proof}

\begin{rem}
Notice that, when $\delta\simeq 0$ one has that
$$
 d_y\nu^\delta[\nu^\delta]\simeq 0,\quad \nu^\delta\cdot\tau_0\simeq
 0,\ \text{ and }\ \nu^\delta_0\cdot\nu_0\simeq 1,
$$
uniformly on $\Gamma_0$. If $\delta$ can be set equal to zero, then
$$
 \left.\frac{d}{d\epsilon}\right|_{\epsilon=0} \nu^0_{\epsilon
   h}\cdot \nu_{\epsilon h}\equiv 0.
$$
\end{rem}

\begin{lem}\label{dtnvar}
Given $h\in buc^{2+\alpha}(\Gamma_0)$ and $\epsilon>0$ small,
let $u_{\epsilon h}$ solve the boundary value problem 
$$
\begin{cases}
 -\Delta u_{\epsilon h}=f&\text{in }\Omega_{\epsilon h}=\varphi^\delta
 _{\epsilon h}(\Omega_0),\\ u_{\epsilon h}=0&\text{on
 }\Gamma_{\epsilon h}=\varphi^\delta_{\epsilon h}(\Gamma_0),
\end{cases}
$$
for $f\in \operatorname{C}^\infty(\mathbb{R}^n)$. Then
$$
 \left.\frac{d}{d\epsilon}\right|_{\epsilon=0} \partial_{\nu_{\epsilon
 h}}u_{\epsilon h}=-DtN_{\Gamma_0}\bigl(
[\partial_{\nu^\delta}u_0]h\bigr)+ \bigl(
\nu_0^\mathsf{T}D^2u_0\nu^\delta \bigr) h, 
$$
where the Dirichlet-to-Neumann operator $DtN_\Gamma$ is the operator that yields the
normal derivative $\partial _{\nu_\Gamma}u_g$ (Neumann datum) of the
harmonic function $u_g$ with Dirichlet datum $g$, that is
$$
 DtN_\Gamma(g)=\partial _{\nu_\Gamma}u_g,
$$ 
and where $u_0$ is the solution corresponding to the boundary value
problem in $\Omega_0$.
\end{lem}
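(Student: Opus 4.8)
The plan is to read the left-hand side of the asserted identity as a \emph{material derivative}, namely the $\epsilon$-derivative at $\epsilon=0$ of $y\mapsto\big(\partial_{\nu_{\epsilon h}}u_{\epsilon h}\big)\big(\varphi^\delta_{\epsilon h}(y)\big)$, $y\in\Gamma_0$ (this is the object that actually enters $G$ in \eqref{rhoev} and, after differentiation, \eqref{DG}), and then to split it by the product rule into a contribution of the moving unit normal $\nu_{\epsilon h}$ and a contribution of the moving gradient $\nabla u_{\epsilon h}$. The first will be annihilated by Lemma~\ref{normvar} together with the identity $\partial_{\tau_0}u_0=0$ on $\Gamma_0$, while the second will be computed by introducing the \emph{shape (Eulerian) derivative} of $u_{\epsilon h}$ and recognizing it as a harmonic function with an explicit Dirichlet datum; this is where $DtN_{\Gamma_0}$ appears.

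First I would fix the differentiability. Transporting the Dirichlet problem to the fixed domain $\Omega_0$ through $\varphi^\delta_{\epsilon h}$ turns it into a family of elliptic problems on $\Omega_0$ whose coefficients and right-hand side $f\circ\varphi^\delta_{\epsilon h}$ depend smoothly on $\epsilon$, since $\varphi^\delta$ does so in its parameter (cf.\ \eqref{flow}); hence $\tilde u_{\epsilon h}:=u_{\epsilon h}\circ\varphi^\delta_{\epsilon h}$ depends smoothly on $\epsilon$ in $buc^{2+\alpha}(\Omega_0)$, exactly as in the computation behind Proposition~\ref{P2} and \cite[Theorem~3.6]{PG2}. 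Write $\dot u:=\partial_\epsilon\tilde u_{\epsilon h}\big|_{\epsilon=0}$ for the material derivative, $\xi:=\partial_\epsilon\varphi^\delta_{\epsilon h}\big|_{\epsilon=0}$ for the deformation velocity (so that $\xi=h\,\nu^\delta$ on $\Gamma_0$ by \eqref{flow}), and $u':=\dot u-(\xi\,|\,\nabla u_0)$ for the shape derivative. Differentiating $-\Delta u_{\epsilon h}=f$ at interior points (where $f$ does not depend on $\epsilon$) gives $-\Delta u'=0$ in $\Omega_0$, and differentiating $\tilde u_{\epsilon h}\equiv0$ on $\Gamma_0$ gives $\dot u=0$ on $\Gamma_0$, so that
$$
u'\big|_{\Gamma_0}=-\big(\xi\,\big|\,\nabla u_0\big)\big|_{\Gamma_0}=-h\,\partial_{\nu^\delta}u_0 .
$$
By the very definition of the Dirichlet-to-Neumann operator and its linearity this yields
$$
\partial_{\nu_0}u'=DtN_{\Gamma_0}\big(u'|_{\Gamma_0}\big)=-DtN_{\Gamma_0}\big([\partial_{\nu^\delta}u_0]\,h\big).
$$

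Next I would differentiate the pulled-back Neumann trace. Writing $\big(\partial_{\nu_{\epsilon h}}u_{\epsilon h}\big)\big(\varphi^\delta_{\epsilon h}(y)\big)=\big(\nabla u_{\epsilon h}\big(\varphi^\delta_{\epsilon h}(y)\big)\,\big|\,\nu_{\epsilon h}(y)\big)$, with $\nu_{\epsilon h}(y)$ the unit normal to $\Gamma_{\epsilon h}$ at $\varphi^\delta_{\epsilon h}(y)$, the product rule gives at $\epsilon=0$
$$
\Big(\tfrac{d}{d\epsilon}\big|_{\epsilon=0}\nabla u_{\epsilon h}\big(\varphi^\delta_{\epsilon h}(y)\big)\,\Big|\,\nu_0\Big)+\Big(\nabla u_0\,\Big|\,\tfrac{d}{d\epsilon}\big|_{\epsilon=0}\nu_{\epsilon h}\Big).
$$
By Lemma~\ref{normvar} the second summand is a scalar multiple of $(\nabla u_0\,|\,\tau_0)=\partial_{\tau_0}u_0$, which vanishes on $\Gamma_0$ because $u_0=0$ there; it therefore drops out. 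For the first summand the chain rule produces the material derivative of $\nabla u$,
$$
\tfrac{d}{d\epsilon}\big|_{\epsilon=0}\nabla u_{\epsilon h}\big(\varphi^\delta_{\epsilon h}(y)\big)=\nabla u'+D^2u_0\,\xi=\nabla u'+h\,D^2u_0\,\nu^\delta ,
$$
where I used $\partial_\epsilon(\nabla u_{\epsilon h})|_{\epsilon=0}=\nabla u'$. Pairing this with $\nu_0$ and inserting the last display of the previous step gives
$$
\tfrac{d}{d\epsilon}\big|_{\epsilon=0}\partial_{\nu_{\epsilon h}}u_{\epsilon h}=\partial_{\nu_0}u'+\big(\nu_0^\mathsf{T}D^2u_0\,\nu^\delta\big)h=-DtN_{\Gamma_0}\big([\partial_{\nu^\delta}u_0]\,h\big)+\big(\nu_0^\mathsf{T}D^2u_0\,\nu^\delta\big)h ,
$$
which is the claimed formula.

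The only genuinely delicate point is the first step: one must ensure that $\epsilon\mapsto u_{\epsilon h}$ is $C^1$ (in fact $C^2$) \emph{up to the boundary}, so that the material derivative exists, $\partial_\epsilon$ commutes with $\nabla$ and with $\Delta$, and the Neumann trace and the trace of the Hessian on $\Gamma_0$ may be differentiated in $\epsilon$. These are exactly the smooth-dependence facts already used for Proposition~\ref{P2}, secured by transporting to the fixed domain $\Omega_0$; granting them, the remaining steps are the elementary chain-rule bookkeeping above, using only Lemma~\ref{normvar}, the identity $\partial_{\tau_0}u_0=0$, and the definition of $DtN_{\Gamma_0}$.
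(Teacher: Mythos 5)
Your proof is correct, but it takes a genuinely different route from the paper's. The paper decomposes $u_{\epsilon h}=v_{\epsilon h}+u_0$ with $v_{\epsilon h}$ harmonic in $\Omega_{\epsilon h}$, writes $\partial_{\nu_{\epsilon h}}u_{\epsilon h}=-DtN_{\Gamma_{\epsilon h}}(u_0|_{\Gamma_{\epsilon h}})+\nu_{\epsilon h}\cdot\nabla u_0|_{\Gamma_{\epsilon h}}$, and differentiates by invoking the (nontrivial) fact that $\Gamma\mapsto DtN_\Gamma$ is a smooth local section of an operator bundle, applying a four-term pullback product rule; this decomposition only makes literal sense when $\Omega_{\epsilon h}\subset\Omega_0$, so the paper first assumes $h\le 0$ and then removes this restriction by a limiting argument on a slightly shrunken domain. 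You instead work with the classical material/shape (Eulerian) derivative pair $\dot u$ and $u'=\dot u-(\xi\,|\,\nabla u_0)$: you observe that $u'$ is harmonic in $\Omega_0$ with the explicit Dirichlet trace $u'|_{\Gamma_0}=-h\,\partial_{\nu^\delta}u_0$, so the Dirichlet-to-Neumann operator appears only on the \emph{fixed} boundary $\Gamma_0$, and you recover the Hessian term from the chain rule applied to $\nabla u_{\epsilon h}\circ\varphi^\delta_{\epsilon h}$. Your route is sign-free (no restriction to $h\le 0$, hence no limiting argument), and it avoids differentiating the $DtN$ operator itself, replacing that by the elementary observation that the shape derivative of a solution to a Dirichlet problem with $\epsilon$-independent right-hand side is harmonic. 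What the paper's route buys is a uniform framework (the DtN section) that is reused elsewhere and cited from \cite{PG2}; what yours buys is transparency and economy of hypotheses, needing only the smooth $\epsilon$-dependence of the transported solution $\tilde u_{\epsilon h}$ (secured exactly as in Proposition~\ref{P2}). Both proofs kill the tangential term via Lemma~\ref{normvar} together with $\partial_{\tau_0}u_0=0$ on $\Gamma_0$. One small point worth making explicit in your write-up: the vector field $\xi=\partial_\epsilon\varphi^\delta_{\epsilon h}|_{\epsilon=0}$ requires some extension of $\varphi^\delta_{\epsilon h}$ to $\Omega_0$, but since $u'$ is the pointwise limit $\lim_{\epsilon\to0}(u_{\epsilon h}-u_0)/\epsilon$ on compact subsets of $\Omega_0$ and extends continuously to $\bar\Omega_0$, it is independent of that extension, as is the final formula.
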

\begin{proof}
Assume first that  $h\le 0$, hence $\Omega_{\epsilon h}\subset
\Omega_0$. We look for $u_{\epsilon h}$ in the form 
$$
 u_{\epsilon h}=v_{\epsilon h}+u_0,
$$
Then $v_{\epsilon h}$ satisfies
$$
\begin{cases}
 -\Delta v_{\epsilon h}=0&\text{in }\Omega_{\epsilon h},\\ v_{\epsilon
   h}=-u_0\big |_{\Gamma_{\epsilon h}}&\text{on }\Gamma_{\epsilon h},
\end{cases}
$$
and
\begin{align*}
 \partial_{\nu_{\epsilon h}}u_{\epsilon h}=\partial_{\nu_{\epsilon
 h}}v_{\epsilon h}+\partial_{\nu_{\epsilon
  h}}u_0=-DtN_{\Gamma_{\epsilon h}}(u_0\big |_{\Gamma_{\epsilon
  h}})+  \nu_{\epsilon h}\cdot  \nabla u_0\big |_{\Gamma_{\epsilon
  h}} .
\end{align*}
It is known that the mapping
$$
  \mathcal{M}^{2+\alpha}\rightarrow
 \mathcal{L}\bigl( buc^{2+\alpha}(\Gamma),buc^{1+\alpha}(\Gamma)\bigr) ,\quad \Gamma\mapsto DtN_\Gamma,
$$
is a smooth local section of the corresponding bundle. Indicating with
a superscript $*$ the pull-back operation it follows that  (see
\cite[Section 3]{PG2} for more details)
\begin{align}\label{neumannvar}\notag
 \left.\frac{d}{d\epsilon}\right|_{\epsilon=0} 
 \partial_{\nu_{\epsilon h}}u_{\epsilon h}=&
  -\left.\frac{d}{d\epsilon}\right|_{\epsilon=0} \bigl( 
 \varphi^\delta _{\epsilon h}\bigr)^* DtN_{\Gamma_{\epsilon h}}
 \left(\bigl(\bigl(\varphi^\delta _{\epsilon h}\bigr)^{-1}\bigr)^* u_0\big
  |_{\Gamma_0}\right)-DtN_{\Gamma_0}\Bigl(
  \left.\frac{d}{d\epsilon}\right|_{\epsilon=0} \bigl(
  \varphi^\delta _{\epsilon h}\bigr)^*u_0\big |_{\Gamma_{\epsilon
  h}}\Bigr)\\ &+\left( \left.\frac{d}{d\epsilon}\right|_{\epsilon=0}
                \nu_{\epsilon h}\right) \cdot \nabla u_0\big
                |_{\Gamma_0} +   \nu_0\cdot  
                \left(\left.\frac{d}{d\epsilon}\right|_{\epsilon=0} \bigl(
  \varphi^\delta _{\epsilon h}\bigr)^*\nabla u_0\big
                |_{\Gamma_{\epsilon h}}\right)\\\notag
 =&-DtN_{\Gamma_0}\Bigl( [\partial_{\nu^\delta}u_0]h\Bigr)+
    \Bigl(\sum_{j,k=1}^2\nu_0^j(\nu^\delta )^k \frac{\partial^2}{\partial
    x^j \partial x^k}u_0\Bigr)h,
\end{align}
since the first term after the first equality sign vanishes in view of
the homogeneous Dirichlet condition satisfied by $u_0$ and the third 
in view of Lemma \ref{normvar} and of the boundary condition again. It
remains to show that the result remains valid for any $h$ without the
restriction that $h\leq 0$. To that end, define
$\Gamma_{r_0}=\varphi^\delta(\Gamma_0,r_0)$ for $r_0>0$ small enough
and replace the solution $u_0$ by the solution $u_{r_0}$ in the above
argument. At the end of the calculation, formula \eqref{neumannvar} is
obtained with all terms after the first equality sign
non-vanishing. Letting $r_0$ tend to zero makes them vanish and delivers the
claim. For more details we refer to the proof of \cite[Theorem 3.7]{PG2}.
\end{proof}

\begin{lem}\label{l35}
Given $h\in buc^{2+\alpha}(\Gamma_0)$ it holds that
$$
 \left.\frac{d}{d\epsilon}\right|_{\epsilon=0} \partial_{\nu_{\epsilon
 h}}u_{\epsilon h}=-DtN_{\Gamma_0}\Bigl(
 [\partial_{\nu^\delta}u_0]h\Bigr)+\Bigl(\nu_0^\mathsf{T}
 D^2u_0\nu^\delta\Bigr)h+(\partial_{\nu_0}u^{(1)}_0)
 \left.\frac{d}{d\epsilon}\right|_{\epsilon=0} \lambda_{\epsilon h},
$$
where
\begin{equation}\label{lambda}
 \lambda_{\epsilon h}=\frac{{\mathcal{V}}}{\int _{\Omega_{\epsilon
 h}}u^{(1)}_{\epsilon h}\, dx}-{\mu}\frac{\int _{\Omega_{\epsilon
 h}}u^{(x^1)}_{\epsilon h}\, dx}{\int _{\Omega_{\epsilon h}}u^{(1)}_{\epsilon h}\, dx}.
\end{equation}
\end{lem}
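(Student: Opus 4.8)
The plan is to reduce the statement to Lemma~\ref{dtnvar} by exploiting the linearity of \eqref{d1}--\eqref{d2} in its right-hand side. By Proposition~\ref{P1}, for each small $\epsilon$ one has $u_{\epsilon h}=\mu u^{(x^1)}_{\epsilon h}+\lambda_{\epsilon h}u^{(1)}_{\epsilon h}$ on $\Omega_{\epsilon h}$, with $\lambda_{\epsilon h}$ given by the formula displayed in the statement (which is just \eqref{lambda} of Proposition~\ref{P1} applied to $\Omega_{\epsilon h}$). Abbreviating $\lambda_0:=\lambda_{\epsilon h}\big|_{\epsilon=0}$, I would write
$$
 u_{\epsilon h}=w_{\epsilon h}+(\lambda_{\epsilon h}-\lambda_0)\,u^{(1)}_{\epsilon h},\qquad w_{\epsilon h}:=\mu u^{(x^1)}_{\epsilon h}+\lambda_0\, u^{(1)}_{\epsilon h},
$$
so that $w_{\epsilon h}$ solves $-\Delta w_{\epsilon h}=\mu x^1+\lambda_0$ in $\Omega_{\epsilon h}$ with $w_{\epsilon h}=0$ on $\Gamma_{\epsilon h}$; here the right-hand side $f=\mu x^1+\lambda_0$ is affine, hence smooth, and independent of $\epsilon$, and $w_0=u_0$.

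I would then differentiate at $\epsilon=0$ term by term. Since $w_{\epsilon h}$ satisfies the hypotheses of Lemma~\ref{dtnvar}, that lemma gives
$$
 \left.\frac{d}{d\epsilon}\right|_{\epsilon=0}\partial_{\nu_{\epsilon h}}w_{\epsilon h}=-DtN_{\Gamma_0}\bigl([\partial_{\nu^\delta}u_0]h\bigr)+\bigl(\nu_0^{\mathsf{T}}D^2u_0\,\nu^\delta\bigr)h.
$$
For the remaining term, $\partial_{\nu_{\epsilon h}}\bigl((\lambda_{\epsilon h}-\lambda_0)u^{(1)}_{\epsilon h}\bigr)=(\lambda_{\epsilon h}-\lambda_0)\,\partial_{\nu_{\epsilon h}}u^{(1)}_{\epsilon h}$, and since the scalar prefactor $\lambda_{\epsilon h}-\lambda_0$ vanishes at $\epsilon=0$, the product rule leaves only the contribution in which it is differentiated, namely $\bigl(\left.\frac{d}{d\epsilon}\right|_{\epsilon=0}\lambda_{\epsilon h}\bigr)\,\partial_{\nu_0}u^{(1)}_0$; the other factor $\left.\frac{d}{d\epsilon}\right|_{\epsilon=0}\partial_{\nu_{\epsilon h}}u^{(1)}_{\epsilon h}$ is itself finite (e.g.\ again by Lemma~\ref{dtnvar} with $f\equiv 1$), so its product with the vanishing prefactor contributes nothing. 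Adding the two contributions yields exactly the asserted identity.

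The one point that deserves a separate word is that $\epsilon\mapsto\lambda_{\epsilon h}$ is differentiable at $\epsilon=0$: this follows from the formula for $\lambda_{\epsilon h}$, the positivity of the denominator $\int_{\Omega_{\epsilon h}}u^{(1)}_{\epsilon h}\,dx$ (maximum principle, as in Proposition~\ref{P1}), and the smooth dependence on $\epsilon$ of the solution operators $f\mapsto u^{(f)}_{\epsilon h}$ and of the domain $\Omega_{\epsilon h}=\varphi^\delta_{\epsilon h}(\Omega_0)$, hence of both volume integrals --- the same regularity input already used in Proposition~\ref{P2} and detailed in \cite[Section~3]{PG2}. Beyond this bookkeeping there is no genuine obstacle: the substance of the lemma is already contained in Lemma~\ref{dtnvar}, the extra $\lambda$-term being simply the price for the right-hand side of \eqref{d1} depending on the unknown domain through the volume constraint \eqref{d3}.
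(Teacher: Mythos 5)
Your argument is correct and follows the same essential route as the paper's proof: both decompose $u_{\epsilon h}=\mu u^{(x^1)}_{\epsilon h}+\lambda_{\epsilon h}u^{(1)}_{\epsilon h}$ via Proposition~\ref{P1}, apply Lemma~\ref{dtnvar} to the $\epsilon$-independent right-hand-side pieces, and capture the $\lambda$-dependence through the product rule. Your repackaging via $w_{\epsilon h}=\mu u^{(x^1)}_{\epsilon h}+\lambda_0 u^{(1)}_{\epsilon h}$ (so Lemma~\ref{dtnvar} is invoked once with $w_0=u_0$ rather than separately for $u^{(x^1)}$ and $u^{(1)}$) is a harmless cosmetic variant, and the extra remark on differentiability of $\epsilon\mapsto\lambda_{\epsilon h}$ is a sensible clarification.
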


\begin{proof}
Recall from Proposition~\ref{P1} that
$$
 u_{\epsilon h}=\mu u^{(x^1)}_{\epsilon h}+\lambda_{\epsilon h}u^{(1)}_{\epsilon h}  
$$
with $\lambda_{\epsilon h}$ given as in the statement. Lemma
\ref{dtnvar} implies that
\begin{align*}
\left.\frac{d}{d\epsilon}\right|_{\epsilon=0} \partial_{\nu_{\epsilon
 h}} u^{(x^1)}_{\epsilon
  h}&=-DtN_{\Gamma_0}\Bigl( [\partial_{\nu^\delta
      }u_0^{(x^1)}]h\Bigr)+\Bigl(\nu_0^\mathsf{T}D^2u_0^{(x^1)}\nu^\delta\Bigr)
  h ,\\ 
	\left.\frac{d}{d\epsilon}\right|_{\epsilon=0} \partial_{\nu_{\epsilon
 h}} u^{(1)}_{\epsilon h} &=-DtN_{\Gamma_0}\Bigl( [\partial_{\nu^\delta
      }u_0^{(1)}]h\Bigr)+
  \Bigl(\nu_0^\mathsf{T}D^2u_0^{(1)}\nu^\delta\Bigr) h.
\end{align*}
It follows that
$$
 \left.\frac{d}{d\epsilon}\right|_{\epsilon=0}\partial_{\nu_{\epsilon
 h}}u_{\epsilon h}=-DtN_{\Gamma_0}\Bigl(
 [\partial_{\nu^\delta}u_0]h\Bigr)+\Bigl(\nu_0^\mathsf{T}
 D^2u_0\nu^\delta\Bigr)h+(\partial_{\nu_0}u^{(1)}_0)
 \left.\frac{d}{d\epsilon}\right|_{\epsilon=0} \lambda_{\epsilon h}.
$$
\end{proof}
Combining the results of \eqref{DG}, Lemma~\ref{l32}, and
Lemma~\ref{l35} the linearization of $G$ at zero is seen to be given
by the expression
\begin{align*}
 DG(0)h=& -\frac{1}{\nu^\delta\cdot \nu_0}
 F'(-\partial_{\nu_0}u_0)\Big\{- DtN_{\Gamma_0}\Bigl(
  [\partial_{\nu^\delta}u_0]h\Bigr)+\Bigl(\nu_0^\mathsf{T} 
 D^2u_0\nu^\delta\Bigr)h +(\partial_{\nu_0}u^{(1)}_0)
 \left.\frac{d}{d\epsilon}\right|_{\epsilon=0} \lambda_{\epsilon h}\Bigr\}\\
& - \frac{1}{(\nu^\delta\cdot \nu_0)}F(-\partial_{\nu_0}u_0)\Bigl\{
 h \big(d_y\nu^\delta[\nu_0]\big| \nu_0\big)
	-  h' \big(\nu^\delta\big| \tau_0\big)\Bigr\}\\
 =:&\: I+II+III+IV+V,
\end{align*}
for $h\in buc^{2+\alpha}(\Gamma_0)$, where $(u_0,\lambda_0)$ is
the solution to \eqref{d1}--\eqref{d3} in $\Omega_0$ from
Proposition~\ref{P1}. From this formula we derive the following
generation result.

\begin{thm}\label{T1}
Suppose \eqref{F} and let $\Gamma_0\in \mathcal{M}^{2+\alpha}$. 
Then
$$
 -DG(0)\in \mathcal{H} \bigl( buc^{2+\alpha}(\Gamma_0),
 buc^{1+\alpha}(\Gamma_0)\bigr) 
$$
{for  $\mu\in[0,\mu_0)$, where $\mu_0>0$ is small enough} depending on
$\Omega_0$ and ${\mathcal{V}}$. In other words, $DG(0)$ generates
an analytic {$\operatorname{C}^0$}-semigroup on
$buc^{1+\alpha}(\Gamma_0)$ for $\mu <\mu_0$ and its domain of definition
coincides with $buc^{2+\alpha}(\Gamma_0)$.  There is $\mu_1\ge \mu_0$ such that $$
 DG(0)\in \mathcal{H} \bigl( buc^{2+\alpha}(\Gamma_0),
 buc^{1+\alpha}(\Gamma_0)\bigr) 
$$
for $\mu\ge\mu_1$, which makes the evolution equation linearly ill-posed.
\end{thm}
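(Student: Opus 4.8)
The plan is to read off, from the explicit five-term formula for $DG(0)$ derived above, which term carries the principal symbol and to show that this principal part is a negative multiple of $DtN_{\Gamma_0}$ when $\mu<\mu_0$ and a positive multiple when $\mu\ge\mu_1$. First I would observe that among the terms $I$--$V$, the only one that is not a lower-order (first-order or zeroth-order) perturbation is term $I$, namely
\[
 I=\frac{1}{\nu^\delta\cdot\nu_0}\,F'(-\partial_{\nu_0}u_0)\,DtN_{\Gamma_0}\bigl([\partial_{\nu^\delta}u_0]h\bigr),
\]
since $DtN_{\Gamma_0}\in\mathcal{L}(buc^{2+\alpha}(\Gamma_0),buc^{1+\alpha}(\Gamma_0))$ is a first-order elliptic pseudodifferential operator with principal symbol $|\xi|$, while terms $II$ and $V$ are pointwise multiplication operators (order zero), $IV$ is a first-order operator ($h\mapsto h'$ times a bounded coefficient), and $III$ is again a multiplication operator after inserting Lemma~\ref{l35}; none of these affects parabolicity, which is governed by the top-order symbol. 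The factor $1/(\nu^\delta\cdot\nu_0)$ is close to $1$ and in particular strictly positive for $\delta$ small, and $F'>0$ by \eqref{F}, so the sign of the coefficient of the principal part is exactly the sign of $[\partial_{\nu^\delta}u_0]=(\nu^\delta\,|\,\nabla u_0)|_{\Gamma_0}$ on $\Gamma_0$.

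Next I would invoke Proposition~\ref{P1}: for $\mu\in[0,\mu_0)$ one has $-\partial_{\nu_0}u_0>0$ on $\Gamma_0$, hence $\partial_{\nu_0}u_0<0$; since $u_0$ vanishes on $\Gamma_0$ its gradient there is normal, $\nabla u_0|_{\Gamma_0}=(\partial_{\nu_0}u_0)\nu_0$, so $(\nu^\delta\,|\,\nabla u_0)|_{\Gamma_0}=(\partial_{\nu_0}u_0)\,(\nu^\delta\,|\,\nu_0)<0$ (again using $\nu^\delta\cdot\nu_0>0$). Therefore the coefficient multiplying $DtN_{\Gamma_0}$ in $-DG(0)$ is strictly positive, uniformly on the compact curve $\Gamma_0$, so $-DG(0)$ is, modulo lower-order terms, a positive multiple of $DtN_{\Gamma_0}$; by the standard perturbation theory for generators of analytic semigroups (the class $\mathcal{H}$ is stable under lower-order perturbations, and $DtN_{\Gamma_0}$ itself generates an analytic semigroup on $buc^{1+\alpha}(\Gamma_0)$ with domain $buc^{2+\alpha}(\Gamma_0)$), this gives $-DG(0)\in\mathcal{H}(buc^{2+\alpha}(\Gamma_0),buc^{1+\alpha}(\Gamma_0))$, which is the first assertion. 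Conversely, for $\mu\ge\mu_1$ Proposition~\ref{P1} gives $-\partial_{\nu_0}u_0<0$ on $\Gamma_0$, so the same computation yields $(\nu^\delta\,|\,\nabla u_0)|_{\Gamma_0}>0$ and the coefficient of $DtN_{\Gamma_0}$ in $-DG(0)$ becomes strictly negative; equivalently $DG(0)$ itself has principal part a positive multiple of $DtN_{\Gamma_0}$, hence $DG(0)\in\mathcal{H}(\cdots)$, i.e. the linearized equation $\dot\rho=DG(0)\rho$ runs the diffusion backwards and is ill-posed.

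The main obstacle is the careful bookkeeping that every term other than $I$ is genuinely of order $\le 1$ and thus a relatively bounded (indeed lower-order) perturbation that does not disturb the generation property — in particular one must confirm that $\frac{d}{d\epsilon}\big|_{\epsilon=0}\lambda_{\epsilon h}$ appearing in term $III$ depends on $h$ only through integral (hence smoothing, order $-\infty$) expressions, which is clear from formula \eqref{lambda} since differentiating the volume integrals in $h$ produces boundary integrals of $h$ against smooth densities; and that $DtN_{\Gamma_0}\in\mathcal{H}$ with the stated domain, for which I would cite the known mapping properties of the Dirichlet-to-Neumann operator on little-Hölder spaces (as used in \cite{PG2}) and its characterization as a first-order elliptic operator with positive principal symbol. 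Once these two points are in place, the sign dichotomy from Proposition~\ref{P1} immediately produces the two halves of the theorem, the boundary case being handled by continuity of the spectral data in $\mu$ if one wishes to pin down $\mu_0$ and $\mu_1$ more precisely.
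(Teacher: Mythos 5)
Your plan follows essentially the same route as the paper: isolate term $I$ as the principal part, use Proposition~\ref{P1} to read off the sign of $\partial_{\nu^\delta}u_0$ on $\Gamma_0$, and treat the remaining terms as perturbations that do not disturb membership in $\mathcal{H}$. Your identity $\nabla u_0|_{\Gamma_0}=(\partial_{\nu_0}u_0)\nu_0$, so that $\partial_{\nu^\delta}u_0=(\partial_{\nu_0}u_0)(\nu^\delta\mid\nu_0)$, is a clean exact replacement for the paper's approximate argument ``$-\partial_{\nu^\delta}u_0\simeq-\partial_{\nu_0}u_0$ for $\delta$ small,'' so that part of your reasoning is if anything tighter.

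However, there is a genuine gap. You treat $I=h\mapsto c_1\,DtN_{\Gamma_0}\bigl([\partial_{\nu^\delta}u_0]h\bigr)$ as a composition of a multiplication operator with $DtN_{\Gamma_0}\in\mathcal{L}\bigl(buc^{2+\alpha}(\Gamma_0),buc^{1+\alpha}(\Gamma_0)\bigr)$, but for this composition even to make sense you need the multiplier $\partial_{\nu^\delta}u_0$ to map $buc^{2+\alpha}(\Gamma_0)$ into itself, i.e.\ you need $\partial_{\nu^\delta}u_0\in buc^{2+\alpha}(\Gamma_0)$. The naive regularity is only $buc^{1+\alpha}(\Gamma_0)$, since $u_0\in buc^{2+\alpha}(\Omega_0)$ loses one derivative under $\partial_{\nu^\delta}$. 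The paper's proof opens with a dedicated PDE argument that recovers the extra order: $\partial_{\nu^\delta}u_0$ solves an elliptic problem in $\Omega_0$ with right-hand side $-\Delta\nu^\delta\cdot\nabla u_0-2D\nu^\delta:D^2u_0+\mu(\nu^\delta)^1\in buc^\alpha(\Omega_0)$ and Neumann trace in $buc^{1+\alpha}(\Gamma_0)$ (using \eqref{nunu}), whence $\partial_{\nu^\delta}u_0\in buc^{2+\alpha}(\Gamma_0)$ by elliptic regularity. Your write-up does not notice this issue, so you have not actually shown that $I\in\mathcal{L}\bigl(buc^{2+\alpha}(\Gamma_0),buc^{1+\alpha}(\Gamma_0)\bigr)$.

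Two smaller points. First, you have swapped the labels of the last two terms (the $h'$ term is the paper's $V$, not $IV$), which is harmless but could confuse. Second, you call that $h'$ term a ``lower-order'' perturbation; it is not lower order than $DtN_{\Gamma_0}$ in the pseudodifferential sense — both are order one. The paper instead exploits that its coefficient $\nu^\delta\cdot\tau_0$ is $\simeq 0$ for $\delta$ small, so the term is a small order-one perturbation, and then uses that $\mathcal{H}$ is open in $\mathcal{L}$. Your argument would go through with a symbol-sector estimate even without smallness, but as written the phrase ``lower-order perturbation'' asserts something false, and you would need to supply the sector argument (or invoke smallness as the paper does) to make that step airtight.
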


\begin{proof}
First observe that for $\partial_{\nu^\delta }u_0$ we have 
\begin{align*}
 -\Delta {\partial_{\nu^\delta }u_0}=&-\sum_{j,k=1}^2\partial _j^2
  \bigl( (\nu^\delta )^k \partial _ku_0\bigr) =-\sum_{j,k=1}^2
 \Big\{ \bigl(\partial_j^2(\nu^\delta )^k\bigr)\partial_k
  u_0+2\partial_j(\nu^\delta )^k \partial_j \partial_ku_0+(\nu^\delta )^k
  \partial_j^2 \partial_k u_0\Bigr\}\\
 =& -\Delta\nu^\delta \cdot\nabla u_0-2D\nu^\delta
    :D^2u_0-\nu^\delta \cdot\nabla\Delta u_0\\
 =&-\Delta\nu^\delta \cdot\nabla u_0-2D\nu^\delta
    :D^2u_0+\mu (\nu^\delta )^1\in buc^{\alpha}(\Omega_0)
\end{align*}
by definition of $u_0\in  buc^{2+\alpha}(\Omega_0)$ and
\begin{align}\label{nunu}\notag
 \partial_{\nu_0}\bigl( \partial_{\nu^\delta }u_0\bigr)&=\sum_{k=1}^2 \nu_0^k \partial_k
 \Bigl( \bigl[ (\nu^\delta |\nu_0)\nu_0+(\nu^\delta |\tau_0)\tau_0
 \bigr]\cdot\nabla u_0\Bigr)\\\notag &= \bigl(\partial_{\nu_0}
 (\nu^\delta |\nu_0)\bigr)\partial_{\nu_0}u_0+(\nu^\delta |\nu_0)
 \partial _{\nu_0\nu_0}u_0\\
 &= \bigl( \partial_{\nu_0}\nu^\delta\big |
    \nu_0\bigr)\partial_{\nu_0}u_0- (\nu^\delta |\nu_0)f\in buc^{1+\alpha}(\Gamma_0) 
\end{align}
for $f(x):=\mu x^1+\lambda$, where we used for the second equality
that $\partial_{\tau_0} u_0=0$ owing to the Dirichlet boundary
condition and for the third equality the fact that
$\partial_{\nu_0}\nu_0= 0$ along with $$\partial
  _{\nu_0\nu_0}u_0= \partial _{\nu_0\nu_0}u_0+\partial
  _{\tau_0\tau_0}u_0 =\Delta u_0=-f \ \text{ on } \Gamma_0
  .$$
Consequently, classical theory of boundary value problems implies that
$$
 \partial_{\nu^\delta }u_0\in buc^{2+\alpha}(\Gamma_0).
$$
{From this it follows that
$$
 \left[h\mapsto \frac{1}{\nu^\delta\cdot \nu_0}
 F'(-\partial_{\nu_0}u_0) DtN_{\Gamma_0}\Bigl(
  [\partial_{\nu^\delta}u_0]h\Bigr)\right] \in \mathcal{L} \bigl( buc^{2+\alpha}(\Gamma_0),
 buc^{1+\alpha}(\Gamma_0)\bigr).
$$}
Next notice that the map that associates {with} a curve $\Gamma$ the
  corresponding $\lambda$ from Proposition~\ref{P1} is well-defined
in a neighborhood of $\Gamma_0$ in $\mathcal{M}^{2+\alpha}$.
Consequently, its tangential map
$$
  buc^{2+\alpha}(\Gamma_0)\longrightarrow
 \mathbb{R},\quad h\mapsto \left.\frac{d}{d\epsilon}\right|_{\epsilon=0}
 \lambda_{\epsilon h}
$$
is a linear, rank 1, and, hence, compact operator. Now term $I$ is the
most important one and defines an elliptic pseudodifferential operator
of order~1 whenever 
$$
 -\partial _{\nu^\delta }u_0>0,\: F'(-\partial
 _{\nu_0}u_0)>0,\ \text{ and }\ \nu^\delta \cdot \nu_0 >0
 \quad \text{on }\ \Gamma_0. 
$$
The second condition is satisfied by
assumption~\eqref{F}. The first  holds true if
$-\partial_{\nu_0}u_0>0$ on $\Gamma_0$, which 
follows from Proposition~\ref{P1}. 
Indeed, if $\delta$ is small enough, then $\nu^\delta \cdot
\nu_0\simeq 1$ (which also guarantees the validity of the third
condition), and therefore we have that
$$
 -\partial_{\nu^\delta }u_0\simeq -\partial _{\nu_0}u_0>0,
$$
thanks to the uniform convergence in $buc^{1+\alpha}$ in
\eqref{normreg}. This implies that $I$ is in fact the generator of an
analytic {$\operatorname{C}^0$}-semigroup on 
$buc^{1+\alpha}(\Gamma_0)$ with domain $buc^{2+\alpha}(\Gamma_0)$. 
A complete argument would require a standard localization argument
based on the smoothness of the coefficients and a symbol analysis of
the corresponding frozen coefficients operator (see e.g. \cite{ES,EG}
for more details). In this case, the latter has the explicit form
$a_0|\xi|$ because of the particularly insightful form of the main
term of $DG(0)$, which, it is reminded, is itself a consequence of the
use of coordinates constructed by means of the flow
$\varphi^\delta$. As for the remaining terms, they are lower order
perturbations (as multiplication operators), like
$II-IV$ thanks to regularity of the coefficients (using also
\eqref{nunu}), or a small perturbation, as for $V$, thanks to
$$
 \nu^\delta \cdot\tau_0 \simeq 0.
$$
Recall for the latter that the set of analytic generators is open in
the natural operator topology of
$\mathcal{L}\bigl(buc^{2+\alpha}(\Gamma_0),buc^{1+\alpha}(\Gamma_0)\bigr)$.
The first assertion is therefore proved. As for the second, we note
that Proposition~\ref{P1} yields  $\mu_1\ge \mu_0$ such
$-\partial_{\nu^\delta}u_0<0$ on $\Gamma_0$ {for $\mu\ge \mu_1$},
and the claim follows by 
approximation as above.
\end{proof}

Existence results based on maximal regularity  can now be
used to derive the following

\begin{thm}\label{T2}
Given any $\Gamma_0\in buc^{2+\alpha}$, there is a $\mu_0>0$
(depending on {$\Gamma_0$} and ${\mathcal{V}}$) such that, {for all
  $\mu\in [0,\mu_0)$, system} \eqref{d1}--\eqref{d5}
is well-posed on {some maximal interval $[0,T_0)$}. The
solution $(u,\Gamma)$ satisfies 
$$
 \Gamma(t)=\Gamma _{\rho(t,\cdot)},\quad t\in [0,T_0),
$$
with
$$
 \rho\in \operatorname{C}\bigl( [0,T_0),
 buc^{2+\alpha}(\Gamma_0)\bigr)\cap \operatorname{C}^1 
 \bigl( [0,T_0), buc^{1+\alpha}(\Gamma_0)\bigr),
$$
and
$$
 u(t,\cdot)\in
 buc^{2+\alpha}\bigl( \Omega(t)\bigr),\quad t\in [0,T_0),
$$
for $\Omega(t)=\Omega_{\rho(t,\cdot)}$.
There is $\mu_1\ge \mu_0$ such that the system \eqref{d1}--\eqref{d5}
is linearly ill-posed for $\mu\ge \mu_1$.
\end{thm}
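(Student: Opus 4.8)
The strategy is to recast \eqref{d1}--\eqref{d5} as an abstract Cauchy problem and then invoke a standard local well-posedness theorem for fully nonlinear parabolic equations in continuous interpolation spaces; the analytic input has already been assembled in the preceding results. Concretely, by Sections~\ref{Sec2}--\ref{Sec3} the moving boundary problem is, as long as the wetted region stays inside the generalized tubular neighborhood $T^{\nu^\delta}_{r_0}(\Gamma_0)$ of $\Omega^0$, equivalent to the single nonlocal equation \eqref{rhoev},
\[
 \dot\rho=G(\rho),\qquad \rho(0,\cdot)\equiv0,
\]
for the curve parameter $\rho(t,\cdot)\in buc^{2+\alpha}(\Gamma_0)$, with $\Gamma(t)=\Gamma_{\rho(t,\cdot)}$, $\Omega(t)=\Omega_{\rho(t,\cdot)}$, and the height field recovered a posteriori as the elliptic solution $u_{\rho(t)}$ of Proposition~\ref{P1} on $\Omega_{\rho(t)}$. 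The two ingredients required to run the abstract machinery are already in hand: by Proposition~\ref{P2}, $G\in C^2(\mathcal{O},buc^{1+\alpha}(\Gamma_0))$ on an open zero-neighborhood $\mathcal{O}\subset buc^{2+\alpha}(\Gamma_0)$, and by Theorem~\ref{T1}, $-DG(0)\in\mathcal{H}\bigl(buc^{2+\alpha}(\Gamma_0),buc^{1+\alpha}(\Gamma_0)\bigr)$ whenever $\mu\in[0,\mu_0)$, i.e.\ $DG(0)$ generates an analytic semigroup on $buc^{1+\alpha}(\Gamma_0)$ with domain $buc^{2+\alpha}(\Gamma_0)$.

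Granting these, the plan is to apply the standard theory of fully nonlinear parabolic evolution equations in continuous interpolation spaces (in the spirit of the results of Da~Prato--Grisvard, Angenent, and Lunardi, and of the abstract framework of \cite{PG2}), whose hypotheses are precisely (a) the regularity $G\in C^1$ near the initial datum, (b) the generation property of $DG(0)$, and (c) that the underlying pair of spaces form an admissible continuous interpolation couple. The little H\"older spaces $buc^{1+\alpha}(\Gamma_0)$ and $buc^{2+\alpha}(\Gamma_0)$ constitute exactly such a couple --- the second is densely and continuously embedded in the first and the semigroup generated by $-DG(0)$ is strongly continuous on it --- and all of (a)--(c) have been verified above, noting in addition that, modulo lower-order multiplication operators, $DG(0)$ is of the form $\mp a_0\,DtN_{\Gamma_0}$ with strictly positive $a_0$, whose generation and domain characterization on the $buc$-scale are classical. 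The abstract theorem then yields, for each $\mu\in[0,\mu_0)$, a unique maximal solution
\[
 \rho\in\operatorname{C}\bigl([0,T_0),buc^{2+\alpha}(\Gamma_0)\bigr)\cap\operatorname{C}^1\bigl([0,T_0),buc^{1+\alpha}(\Gamma_0)\bigr),
\]
with the usual blow-up alternative (as $t\uparrow T_0$ either $\|\rho(t,\cdot)\|_{buc^{2+\alpha}}$ becomes unbounded or $\rho(t,\cdot)$ leaves $\mathcal{O}$, in particular leaves the tube) and continuous dependence on the data; allowing a small nonzero initial value $\rho(0,\cdot)=\rho_{\mathrm{init}}\in\mathcal{O}$ in the same theorem encodes continuous dependence on perturbations of the initial wetted region $\Omega^0$. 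Reverting to the geometry via $\Gamma(t)=\Gamma_{\rho(t,\cdot)}$ and solving \eqref{d1}--\eqref{d3} on $\Omega(t)=\Omega_{\rho(t,\cdot)}$ by Proposition~\ref{P1} produces $u(t,\cdot)\in buc^{2+\alpha}(\Omega(t))$, which gives the asserted well-posedness of \eqref{d1}--\eqref{d5} on $[0,T_0)$.

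The linear ill-posedness for $\mu\ge\mu_1$ costs nothing more: by the last part of Theorem~\ref{T1} it is $DG(0)$ itself --- not $-DG(0)$ --- that lies in $\mathcal{H}\bigl(buc^{2+\alpha}(\Gamma_0),buc^{1+\alpha}(\Gamma_0)\bigr)$, so the linearized Cauchy problem $\dot\rho=DG(0)\rho$ is a \emph{backward} parabolic equation. Equivalently, the principal symbol of $DG(0)$ is then $+a_0|\xi|$ with $a_0>0$, so that $\sigma\bigl(DG(0)\bigr)$ is unbounded in the right half-plane and the forward solution operator cannot be bounded on $buc^{2+\alpha}(\Gamma_0)$; testing against the high-frequency modes on $\Gamma_0$ (a Hadamard-type argument) makes the absence of continuous dependence explicit. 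Hence \eqref{rhoev}, and with it \eqref{d1}--\eqref{d5}, is linearly ill-posed for $\mu\ge\mu_1$.

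Since the analytic heart of the matter --- parabolicity of $DG(0)$ and the identification of its domain with $buc^{2+\alpha}(\Gamma_0)$ --- was already settled in Theorem~\ref{T1}, the work that remains is largely bookkeeping. The two points that do require care are the verbatim verification of the hypotheses of the chosen abstract existence theorem in the little H\"older setting, and the faithful transfer of the well-posedness statement --- uniqueness of the geometry $\Gamma(t)$ up to the parametrization freedom, and continuous dependence on $\Omega^0$ --- back from $\rho$ to the original free boundary formulation. I expect the latter, the transfer of uniqueness and continuous dependence to the geometric problem, to be the main thing needing attention.
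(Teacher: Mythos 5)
Your proposal follows the paper's route: reduce to the scalar evolution $\dot\rho=G(\rho)$ of \eqref{rhoev}, feed the smoothness $G\in C^2(\mathcal{O},buc^{1+\alpha}(\Gamma_0))$ from Proposition~\ref{P2} and the generation property $-DG(0)\in\mathcal{H}$ from Theorem~\ref{T1} into the fully nonlinear parabolic existence theory in continuous interpolation (little H\"older) spaces --- the paper cites precisely \cite[Theorem~8.4.1]{Lunardi} and \cite{PG2} --- and recover $(u,\Gamma)$ from $\rho$ via Proposition~\ref{P1}; your backward-parabolic/Hadamard reading of the $\mu\ge\mu_1$ case matches the paper's appeal to the sign reversal in Theorem~\ref{T1}. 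The only cosmetic difference is that the paper makes explicit the openness of $\mathcal{H}\bigl(buc^{2+\alpha}(\Gamma_0),buc^{1+\alpha}(\Gamma_0)\bigr)$ to upgrade the generation property from the single point $\rho=0$ to a whole neighborhood $\mathcal{O}$, a step you left implicit in the choice of abstract theorem.
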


\begin{proof}
Since
$\mathcal{H}\bigl(buc^{2+\alpha}(\Gamma_0),buc^{1+\alpha}(\Gamma_0)\bigr)$
is open in
$\mathcal{L}\bigl(buc^{2+\alpha}(\Gamma_0),buc^{1+\alpha}(\Gamma_0)\bigr)$,
it follows from Theorem~\ref{T1} and Proposition~\ref{P2} that we may
assume with loss of generality that  
$$
-DG(\rho)\in
\mathcal{H}\bigl(buc^{2+\alpha}(\Gamma_0),buc^{1+\alpha}(\Gamma_0)\bigr)\
,\quad \rho\in\mathcal{O},
$$
and the existence assertion follows e.g. from
\cite[Theorem 8.4.1]{Lunardi} (or from \cite[Thorem 5.6]{PG2}) and the
fact that the little H\"older spaces are stable under continuous interpolation.
\end{proof}

\begin{rem}
The above result confirms and quantifies the physical intuition that a
solution exists only for small incline angle and ceases to exist for
larger angles. Obviously, the determining critical size of the angle
depends on $\Gamma_0$ and ${\mathcal{V}}$, i.e. the shape of the
initial wetted region and the mass of liquid since these determine the
sign of $\partial_{\nu_0}u_0$.
\end{rem}

\section{Stability Analysis for Translating Circular Solutions}

We  assume throughout the following that \eqref{affine} is satisfied, that is,
$$
F(q)=a q-b,\quad q\in \mathbb R,
$$
for some $a,b>0$.
Recall that if $R_0$ and $v_0$ are as in \eqref{R0}, then $u_0$
defined in \eqref{ui} solves \eqref{d1b}-\eqref{d4b} on the disk
$\Omega_*=R_0\mathbb B$ (and is positive provided $\mu$ is small). We
now investigate the asymptotic stability of this translating circular
solution for small inclines, i.e. for small $\mu\ge 0$.

\subsection{Reformulation}

We rewrite problem \eqref{d1}-\eqref{d5} by introducing  the translations
$$
\tilde u (t,x):=u(t,x+tv_0e_1),\quad x\in \tilde\Omega(t):=\Omega(t)-tv_0e_1.
$$ 
Substituting this into \eqref{d1}--\eqref{d5} and
dropping again the tildes everywhere for ease of notation, it is
readily seen that \eqref{d1}--\eqref{d5} is equivalent to
 \begin{alignat}{2}\label{d1c}
  -\Delta u&=\mu x^1+\lambda&&\text{in }\Omega(t),\\
 \label{d2c}
  u&=0&&\text{on }\Gamma(t),\\\label{d3c}
  \int _{\Omega(t)}u\, dx&=\mathcal{V},&&\\\label{d4c}
 V&=F( -\partial _\nu u)-v_0 e_1\cdot \nu\qquad&&\text{on }\Gamma(t),\\
\Omega(0)&=\Omega^0,  \label{d5c}
 \end{alignat}
and $(u_0,\Omega_*)$ is a stationary solution to
\eqref{d1c}-\eqref{d5c}. As in the previous section, we can consider
this problem as a single equation for the geometry.
Let $\nu_0(y)=y/R_0$ denote the normal at $y\in\Gamma_0:=R_0\mathbb
S^1$. Note that in this case, since $\nu_0$ is smooth, we can take
$\nu^\delta=\nu_0$ in Section~\ref{Sec2}, and the flow in \eqref{flow}
is simply given by $\varphi^\delta(y,r)=(1+r/R_0) y$. Thus, the
evolution of $\Gamma(t)$ is described by the evolution of the function 
$\rho(t,\cdot):\Gamma_0\to \mathbb{R}$  through
$$
 \Gamma(t)=\big\{ y+\rho(t,y)\nu_0(y)\,\big |\, y\in
 \Gamma_0\big\}=:
 \Gamma_{\rho(t)},
$$
and $\rho$ is governed by
\begin{equation}\label{rhoevd}\begin{cases}
  \dot\rho=\bigl( \nu_0 \big|\nu_\rho
  \bigr)^{-1}\big(F\big(-\partial _{\nu_\rho}u_\rho \big)-v_0e_1\cdot \nu_\rho\big)=:H(\rho),\qquad
  t>0,\quad y\in \Gamma_0, &\\ \rho(0,\cdot)= \rho_0,&\end{cases}
\end{equation}
provided that $\partial\Omega^0=\Gamma_{\rho_0}$, which is possible
for $\Omega^0$ sufficiently close to
$\Omega_*$ (i.e. for $\rho_0$
is small enough). Here, $\nu_\rho=\nu_{\Gamma_\rho}$, and $u_\rho$
with corresponding $\lambda_\rho$ denotes the  solution of 
\eqref{d1}--\eqref{d3} from Proposition~\ref{P1} in 
$\Omega_\rho$, the domain enclosed by $\Gamma_\rho$. Clearly,
$\Omega_0=\Omega_*=R_0\mathbb B$. Note that
\begin{equation}\label{H}
 H\in C^2(\mathcal{O},buc^{1+\alpha}(\Gamma_0)),\qquad H(0)=0,
\end{equation}
according to Proposition~\ref{P2} with $\mathcal{O}$ being an open
zero-neighborhood in $buc^{2+\alpha}(\Gamma_0)$, 
 and $\rho=0$ gives rise to the stationary solution
 $(u_0,\Omega_0)$. We shall prove that $\rho=0$ is locally
 asymptotically stable for equation \eqref{rhoevd} by using the
 principle of linearized stability.

\subsection{Linearization}

We now express the Fr\'echet derivative $DH(0)$ in terms of Fourier
expansions. For this we use polar coordinates  $(r,\theta)$ and
Cartesian coordinates $x$ interchangeably and observe that, if $h\in
buc^{2+\alpha}(\Gamma_0)$, then the form of $F$ implies that
$$
DH(0) h= \left.\frac{d}{d\epsilon}\right|_{\epsilon=0} H(\epsilon h)=
-a  \left.\frac{d}{d\epsilon}\right|_{\epsilon=0}
\big(\partial_{\nu_{\epsilon h}} u_{\epsilon h}\big)-v_0
\left.\frac{d}{d\epsilon}\right|_{\epsilon=0} \left( e_1\cdot
  \nu_{\epsilon h}\right).
$$
Therefore, Lemma~\ref{normvar} and Lemma~\ref{l35} entail that
\begin{equation}\label{DH}
\begin{split}
DH(0) h &= a DtN_{\Gamma_0}\bigl(
 [\partial_{\nu_0}u_0]h\bigr)-a\bigl(\nu_0^\mathsf{T}
 D^2u_0\nu_0\bigr)h-a\partial_{\nu_0}u^{(1)}_0
 \left.\frac{d}{d\epsilon}\right|_{\epsilon=0} \lambda_{\epsilon h} \\
&\quad 
+v_0 \big[ h\big( d_y\nu_0  [\tau_0] \big| \nu_0\big) +h'\big] \tau_0\cdot e_1
\end{split}
\end{equation}
with $\tau_0$ denoting the unit tangent vector to
$\Gamma_0$. We compute the different terms separately. Let $h\in
buc^{2+\alpha}(\Gamma_0)$ be given with representation
$$
h(\theta)=\sum_{n\in\Z} \hat h_n e^{in\theta},\quad \theta\in [0,2\pi].
$$
Then
$$
w(r,\theta)=\sum_{n\in\Z} \hat h_n \left(\frac{r}{R_0}\right)^{\vert n\vert} e^{in\theta}
$$
solves
$$
-\Delta w=0\quad \text{in }\ \Omega_0,\qquad w=h \quad \text{on }\ \Gamma_0,
$$
and thus
$$
DtN_{\Gamma_0}(h)=\partial_r w(R_0,\theta)=\frac{1}{R_0}\sum_{n\in\Z}\hat h_n \vert n\vert e^{in\theta} .
$$
Next, since \eqref{ui} implies
$$
\partial_{\nu_0}u_0(R_0,\theta)=\partial_r u_0(R_0,\theta)=-\frac{\mu
  R_0^2}{8}\left(e^{i\theta}+e^{-i\theta}\right)
-\frac{4\mathcal{V}}{\pi R_0^3}, 
$$
it follows that
$$
\partial_{\nu_0}u_0 h=-\sum_{n\in\Z} \left[\frac{\mu R_0^2}{8}\left(
    \hat h_{n-1}+\hat h_{n+1}\right) +\frac{4\mathcal{V}}{\pi
    R_0^3}\hat h_n\right] e^{in\theta}. 
$$
In summary, we derive that
\begin{equation}\label{r7}
DtN_{\Gamma_0}\big(\partial_{\nu_0}u_0 h\big)= -\sum_{n\in\Z}
\left[\frac{\mu R_0}{8}\left( \hat h_{n-1}+\hat h_{n+1}\right)
  +\frac{4\mathcal{V}}{\pi R_0^4}\hat h_n\right] \vert n\vert
e^{in\theta}. 
\end{equation}
Similarly, we have from \eqref{ui}
$$
\bigl(\nu_0^\mathsf{T}
 D^2u_0\nu_0\bigr)h =\left(\partial_r^2 u_0\right) h=
 -\left(\frac{3\mu R_0}{8} \left(e^{i\theta}+e^{-i\theta}\right)
   +\frac{4\mathcal{V}}{\pi R_0^4}\right) h, 
$$
hence
\begin{equation}\label{r8}
\bigl(\nu_0^\mathsf{T}
 D^2u_0\nu_0\bigr)h = -\sum_{n\in\Z} \left[\frac{3\mu R_0}{8}\left(
     \hat h_{n-1}+\hat h_{n+1}\right) +\frac{4\mathcal{V}}{\pi
     R_0^4}\hat h_n\right]  e^{in\theta}. 
\end{equation}
Before computing the third term of the right-hand side of \eqref{DH},
we focus on the last term. Clearly, $\big( d_y\nu_0  [\tau_0] \big| \nu_0\big)=0$ so that
$$
v_0 \big[ h\big( d_y\nu_0  [\tau_0] \big| \nu_0\big) +h'\big]
\tau_0\cdot e_1=-v_0\sin\theta h', 
$$
where the derivative with respect to arc length is
$h'=\frac{1}{R_0}\frac{d}{d\theta} h$. Consequently, since $v_0=\mu a
R_0^2/4$ by \eqref{R0}, 
\begin{equation}\label{r9}
v_0 \big[ h\big( d_y\nu_0  [\tau_0] \big| \nu_0\big) +h'\big]
\tau_0\cdot e_1= -\frac{\mu a R_0}{8}\sum_{n\in\Z} \left[(n-1)\hat
  h_{n-1}-(n+1)\hat h_{n+1}\right]  e^{in\theta}. 
\end{equation}
As for the third term on the right-hand side of \eqref{DH}
recall, formula \eqref{lambda} for $\lambda_{\epsilon h}$. Then 
\begin{equation*}
 \left.\frac{d}{d\epsilon}\right|_{\epsilon=0} \lambda_{\epsilon h} =
 -\frac{ \mathcal{V}-\mu\int _{\Omega_{0}}u^{(x^1)}_{0}\,
   dx}{\left(\int _{\Omega_{0}}u^{(1)}_{0}\, dx\right)^2}\
 \left.\frac{d}{d\epsilon}\right|_{\epsilon=0} \left(\int
   _{\Omega_{\epsilon h}}u^{(1)}_{\epsilon h}\, dx\right) -
 \frac{\mu}{\int _{\Omega_{0}}u^{(1)}_{0}\, dx}\ 
\left.\frac{d}{d\epsilon}\right|_{\epsilon=0} \left(
\int _{\Omega_{\epsilon
 h}}u^{(x^1)}_{\epsilon h}\, dx\right),
\end{equation*}
and thus, on using 
$$
u^{(x^1)}_{0}(r,\theta)=\frac{r}{8} (R_0^2-r^2)\cos \theta,\qquad
u^{(1)}_{0}(r,\theta)= \frac{1}{4}(R_0^2-r^2),\qquad
\lambda_0=\frac{8\mathcal{V}}{\pi R_0^4}, 
$$ 
according to \eqref{ui}, we deduce
\begin{equation*}
 \left.\frac{d}{d\epsilon}\right|_{\epsilon=0} \lambda_{\epsilon h} =
 -\frac{ 64\mathcal{V}}{\pi^2R_0^8}\,
 \left.\frac{d}{d\epsilon}\right|_{\epsilon=0} \left(\int
   _{\Omega_{\epsilon h}}u^{(1)}_{\epsilon h}\, dx\right) -
 \frac{8\mu}{\pi R_0^4}\, 
\left.\frac{d}{d\epsilon}\right|_{\epsilon=0} \left(
\int _{\Omega_{\epsilon
 h}}u^{(x^1)}_{\epsilon h}\, dx\right).
\end{equation*}
We then invoke the transport theorem to get
$$
\left.\frac{d}{d\epsilon}\right|_{\epsilon=0} \left(
\int _{\Omega_{\epsilon
 h}}u^{(f)}_{\epsilon h}\, dx\right) =
\int_{\Omega_0}\left.\frac{d}{d\epsilon}\right|_{\epsilon=0}
u_{\epsilon h}^{(f)}\, d x + \int_{\Gamma_0} u_0^{(f)} {h}\,d
\sigma_{\Gamma_0}=\int_{\Omega_0}\left.\frac{d}{d\epsilon}\right|_{\epsilon=0}
u_{\epsilon h}^{(f)}\, d x
$$
since $u_{0}^{(f)}$ vanishes on $\Gamma_0$. Consequently,
\begin{equation}\label{lambda2}
 \left.\frac{d}{d\epsilon}\right|_{\epsilon=0} \lambda_{\epsilon h} =
 -\frac{ 64\mathcal{V}}{\pi^2R_0^8}\, \int
 _{\Omega_{0}}\left(\left.\frac{d}{d\epsilon}\right|_{\epsilon=0}
   u^{(1)}_{\epsilon h}\right)\, dx - \frac{8\mu}{\pi R_0^4}\, 
\int _{\Omega_{0}} \left(\left.\frac{d}{d\epsilon}\right|_{\epsilon=0}
  u^{(x^1)}_{\epsilon h}\right)\, dx. 
\end{equation}
It remains to compute the derivatives for which we proceed as in
\cite{PG1}. Given a smooth function $f$ on $\bar\Omega_{\epsilon h}$,
let 
$$
u_{\epsilon h}^{(f)}(x)=\int_{\Omega_{\epsilon h}}{K}_{\epsilon
  h}(x,\bar x) f(\bar x)\,d \bar x,\quad x\in \bar\Omega_{\epsilon
  h}, 
$$
be a representation of the solution to the Dirichlet problem
$$
-\Delta u=f\quad\text{in } \ \Omega_{\epsilon h},\qquad u=0
\quad\text{on } \ \Gamma_{\epsilon h}, 
$$
with Green's function ${K}_{\epsilon h}$ on $\Omega_{\epsilon h}$. 
In particular, for the circle $\Omega_0=R_0\mathbb B$ we have
$$
{K}_0(r,\theta,\bar
r,\bar\theta)=\frac{1}{4\pi}\log\left(\frac{R_0^2\left(r^2+\bar r^2-2r
      \bar r\cos(\theta-\bar\theta)\right)}{r^2\bar r^2+R_0^4-2R_0^2 r
    \bar r\cos(\theta-\bar\theta)}\right). 
$$
Then, noticing that ${K}_{\epsilon h}$ vanishes on $\Omega_{\epsilon h}\times \Gamma_{\epsilon h}$, we obtain
\begin{equation*}
\begin{split}
\left.\frac{d}{d\epsilon}\right|_{\epsilon=0} u_{\epsilon h}^{(f)}(x) &=\int_{\Gamma_0} {K}_0(x,\bar x) f(\bar x)\, d\sigma_{\Gamma_0}(\bar x)+\int_{\Omega_0} \left.\frac{d}{d\epsilon}\right|_{\epsilon=0} {K}_{\epsilon h}(x,\bar x) f(\bar x)\, d \bar x\\
&= \int_{\Omega_0} \left.\frac{d}{d\epsilon}\right|_{\epsilon=0}
{K}_{\epsilon h}(x,\bar x) f(\bar x)\, d \bar x. 
\end{split}
\end{equation*}
Next observe \cite{Had, Schi} that
$$
\left.\frac{d}{d\epsilon}\right|_{\epsilon=0} {K}_{\epsilon h}(x,\bar x)
= {\frac{1}{R_0}}\int_0^{2\pi}\partial_r {K}_0(x,R_0,\phi)\partial_r
{K}_0(\bar x, R_0,\phi) h(\phi)\, d \phi
$$
and that
$$
(r,\theta)\mapsto\int_0^{2\pi}\partial_r {K}_0(r,\theta,R_0,\phi)g(\phi)\, d \phi
$$
is the unique harmonic function in $\Omega_0$ with boundary value $g$
on $\Gamma_{0}$. Therefore, 
$$
\int_0^{2\pi} \partial_r {K}_0(\bar r,\bar\theta,R_0,\phi)
\cos\bar\theta \, d\bar\theta =\frac{\bar r}{R_0} \cos\phi
$$
since ${K}_0(\bar r,\bar\theta,\bar r,\phi) $ is symmetric with
respect to the angular (and radial) variables. Using this, we obtain 
\begin{equation*}
\begin{split}
\left.\frac{d}{d\epsilon}\right|_{\epsilon=0} u_{\epsilon
  h}^{(x^1)}(r,\theta) &= \int_{\Omega_0}
\left.\frac{d}{d\epsilon}\right|_{\epsilon=0} {K}_{\epsilon h}(x,\bar
x) f(\bar x)\, d \bar x\\ 
&= {\frac{1}{R_0}}\int_0^{2\pi}\int_0^{R_0} \partial_r
{K}_0(r,\theta,R_0,\phi) 
\int_0^{2\pi} \partial_r {K}_0(\bar r,\bar\theta,R_0,\phi) \bar r
\cos\bar\theta \, d\bar\theta \, h(\phi) \bar r\, d\bar r\, d\phi\\ 
&= {\frac{1}{R_0^2}}\int_0^{2\pi}\int_0^{R_0} \partial_r
{K}_0(r,\theta,R_0,\phi) \bar r^3 
 h(\phi) \cos\phi\,  d\bar r\, d\phi\\
&= {\frac{R_0^2}{4}}\int_0^{2\pi}  \partial_r {K}_0(r,\theta,R_0,\phi)
 h(\phi) \cos\phi\,   d\phi\,.
\end{split}
\end{equation*}
Integrating this  yields similarly
\begin{equation*}
\begin{split}
\int_{\Omega_0}\left.\frac{d}{d\epsilon}\right|_{\epsilon=0}
u_{\epsilon h}^{(x^1)}\, d x &=
{\frac{R_0^2}{4}}\int_0^{2\pi}\int_0^{R_0}\int_0^{2\pi}  \partial_r
{K}_0(r,\theta,R_0,\phi)\,d\theta\,
   r\,d r\, h(\phi) \cos\phi\,   d\phi \\
&={\frac{R_0^2}{4}}\int_0^{2\pi}\int_0^{R_0} r\,d r\,
 h(\phi) \cos\phi\,   d\phi \, ={\frac{R_0^4}{8}}\int_0^{2\pi} 
 h(\phi) \cos\phi\,   d\phi 
\end{split}
\end{equation*}
and finally
\begin{equation}\label{rB}
\int_{\Omega_0}\left.\frac{d}{d\epsilon}\right|_{\epsilon=0}
u_{\epsilon h}^{(x^1)}\, d x = {\frac{\pi R_0^4}{8}}\left(\hat
  h_{-1}+\hat h_1\right).
\end{equation}
Exactly the same way one computes
\begin{equation}\label{rA}
\int_{\Omega_0}\left.\frac{d}{d\epsilon}\right|_{\epsilon=0}
u_{\epsilon h}^{(1)}\, d x = {\frac{\pi R_0^3}{2}} \hat h_{0}
.
\end{equation}
It now follows from \eqref{lambda2}-\eqref{rA} that
\begin{equation}\label{r10}
 \left.\frac{d}{d\epsilon}\right|_{\epsilon=0} \lambda_{\epsilon h} =
 -\frac{ 32\mathcal{V}}{\pi R_0^5}\, \hat h_0- \mu \left(\hat
   h_{-1}+\hat h_1\right). 
\end{equation}
Consequently, gathering \eqref{DH}-\eqref{r9} and \eqref{r10} and
using $\partial_{\nu_0}u^{(1)}_0=-R_0/2$, we obtain the Fourier
expansion of $DH(0)$ in the form
\begin{equation}\label{r12}
\begin{split}
DH(0) h& =-\frac{12\mathcal{V}a}{\pi R_0^4}\, \hat h_0 -
\frac{4\mathcal{V}a}{\pi R_0^4}\sum_{n\in\Z^*}\big(\vert n\vert
-1\big)\,\hat h_n \, e^{in\theta}\\
&\quad
- \mu\frac{aR_0}{8}\sum_{n\in\Z^*}\left[\big(\vert n\vert +n-4\big) \,
  \hat h_{n-1} + \big(\vert n\vert -n-4\big) \, \hat h_{n+1}\right]\,
e^{in\theta}
\end{split}
\end{equation}
for $h\in buc^{2+\alpha}(\Gamma_0)$ with
$
h(\theta)=\sum_{n\in\Z} \hat h_n e^{in\theta}$, where
$\Z^*:=\Z\setminus\{0\}$. Note that the matrix is tridiagonal if
$\mu>0$. Since $DH(0)$ has a compact resolvent, its spectrum is
discrete and contains only eigenvalues. More information
is found in the following lemma.

\begin{lem}\label{L51}
Let $\mu\ge 0$ be sufficiently small. The kernel of $DH(0)$ is
two-dimensional and spanned by $\{ e^{-i\theta},
e^{i\theta}\}$. Moreover, there is $\omega>0$ independent of $\mu$
such that $\sigma(DH(0))\subset [\mathrm{Re} z\le -\omega]\cup
\{0\}$.
\end{lem}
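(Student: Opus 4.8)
The plan is to read everything off the Fourier representation \eqref{r12}, treating $DH(0)$ as a small tridiagonal modification of an explicitly diagonalizable operator rather than invoking abstract perturbation theory. When $\mu=0$, \eqref{r12} shows that $DH(0)$ is diagonal in the basis $\{e^{in\theta}\}_{n\in\Z}$, acting as $-3\omega_0$ on the constants and as $-\omega_0(|n|-1)$ on $e^{in\theta}$ for $n\neq0$, where $\omega_0:=4\mathcal{V}a/(\pi R_0^4)>0$ does not depend on $\mu$; in particular $\ker DH(0)=\mathrm{span}\{e^{\pm i\theta}\}$ and $\sigma(DH(0))\setminus\{0\}\subset(-\infty,-\omega_0]$. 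For $\mu>0$ small I would keep this $\omega_0$ and prove $\ker DH(0)=\mathrm{span}\{e^{\pm i\theta}\}$ together with $\sigma(DH(0))\subset[\mathrm{Re}\,z\le-\omega_0/2]\cup\{0\}$, which is the claim with $\omega=\omega_0/2=2\mathcal{V}a/(\pi R_0^4)$.

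First, $e^{\pm i\theta}\in\ker DH(0)$ for every $\mu$: inserting $h=e^{\pm i\theta}$ in \eqref{r12}, the constant term drops since $\hat h_0=0$, the diagonal factor $|n|-1$ annihilates the $n=\pm1$ term, and the off-diagonal coefficients $|n|+n-4$ and $|n|-n-4$ vanish precisely at $n=2$ and $n=-2$, which are the only indices that could otherwise pick up a contribution. For the converse inclusion and for the spectral gap, recall that $\sigma(DH(0))$ consists of eigenvalues, and that an eigenfunction $h\in buc^{2+\alpha}(\Gamma_0)$ has Fourier coefficients tending to zero, so $|\hat h_{n^*}|=\max_n|\hat h_n|>0$ is attained at some $n^*$. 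If $|n^*|\ge2$, evaluating \eqref{r12} in row $n^*$ and using $|\hat h_{n^*\pm1}|\le|\hat h_{n^*}|$ yields the Gershgorin-type bound $|\lambda+\omega_0(|n^*|-1)|\le\tfrac{\mu aR_0}{4}|n^*|$, whence $\mathrm{Re}\,\lambda\le\omega_0\bigl(1-\tfrac34|n^*|\bigr)\le-\tfrac{\omega_0}{2}$ once $\mu aR_0\le\omega_0$; if $n^*=0$, the purely diagonal row $0$ forces $\lambda=-3\omega_0$. Hence every eigenvalue with $\mathrm{Re}\,\lambda>-\omega_0/2$ has dominant mode $n^*=\pm1$.

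It remains to dispose of this last case. Since $\mathrm{Re}\,\lambda>-\omega_0/2>-3\omega_0$, row $0$ of \eqref{r12} forces $\hat h_0=0$, so the remaining equations decouple into an independent recursion for the positive-mode coefficients and one for the negative-mode coefficients, each of which defines an eigenfunction for $\lambda$; say the positive part $h^+$ is nontrivial. Its lowest nonzero index must be $1$ — otherwise the equation one index below it forces that coefficient to vanish — so $\hat h_1\neq0$ and row $1$ gives $\hat h_2=\tfrac{2\lambda}{\mu aR_0}\hat h_1$. If $\lambda\neq0$ then $\hat h_2\neq0$, and rows $n\ge2$ of \eqref{r12} read $\hat h_{n+1}=\tfrac{2(\lambda+\omega_0(n-1))}{\mu aR_0}\hat h_n+\tfrac{n-2}{2}\hat h_{n-1}$; since $\mathrm{Re}\,\lambda>-\omega_0/2$ makes $|\lambda+\omega_0(n-1)|\ge\tfrac{\omega_0}{2}(n-1)$ for $n\ge2$, the coefficient in front of $\hat h_n$ has modulus at least $\omega_0(n-1)/(\mu aR_0)$, which for $\mu$ small dominates the coefficient $\tfrac{n-2}{2}$ uniformly in $n$, so an induction gives $|\hat h_{n+1}|\ge2|\hat h_n|$ for all $n\ge2$, hence $|\hat h_n|\to\infty$ — contradicting $h\in buc^{2+\alpha}(\Gamma_0)$. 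Therefore $\lambda=0$, and feeding $\lambda=0$ into the same chain gives $\hat h_2=0$ and then $\hat h_n=0$ for all $n\ge2$, so $h\in\mathrm{span}\{e^{\pm i\theta}\}$. The main obstacle is making the smallness of $\mu$ in this recursion step quantitative and uniform — a bound of the form $\mu aR_0\le\omega_0/2$ already suffices — and carefully handling the transition indices $n\in\{0,\pm1,\pm2\}$ where the general row formulas of \eqref{r12} degenerate; the point to keep in mind is that the off-diagonal entries are themselves of size $O(\mu n)$, so one genuinely needs the linear growth of $\mathrm{Re}(\lambda+\omega_0(n-1))$, guaranteed by $\mathrm{Re}\,\lambda>-\omega_0/2$, to outrun them. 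As an alternative to this hands-on argument, the gap could also be obtained by writing $DH(0)=A_0+\mu B$ with $A_0$ the diagonal $\mu=0$ operator and $B$ bounded from $buc^{2+\alpha}(\Gamma_0)$ to $buc^{1+\alpha}(\Gamma_0)$, noting that $DH(0)$ is then sectorial uniformly for small $\mu$ (small relative bound) and transferring the spectral picture of $A_0$ via a Riesz-projection/continuity argument on a fixed contour around its spectral-gap region, the genuine eigenfunctions $e^{\pm i\theta}$ pinning the cluster near $0$ to be exactly $\{0\}$ of multiplicity two.
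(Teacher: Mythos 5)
Your proof is correct, and it takes a genuinely different route from the paper. The paper writes $DH(0)=A+\mu B$ with $A$ the diagonal $\mu=0$ operator and then appeals to abstract machinery: Amann's perturbation theorem for analytic generators (\cite[I.Corollary 1.4.3]{LQPP}) to place $\sigma(A+\mu B)$ in a shifted sector, followed by Kato's generalized convergence and spectral-continuity results (\cite[IV.Theorems 2.24, 3.16]{Kato}) to conclude that the isolated eigenvalue $0$ of $A$ persists and remains the only spectrum in the half-plane $[\mathrm{Re}\,z>-\omega]$ for small $\mu$; the kernel identification is disposed of in one line as an ``induction argument'' from \eqref{r12}. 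You instead extract everything directly from the tridiagonal Fourier recursion in \eqref{r12}: the diagonal row $n=0$ forces $\hat h_0=0$ for eigenvalues with $\mathrm{Re}\,\lambda>-\omega_0/2$, a Gershgorin estimate rules out dominant modes $|n^*|\geq 2$, and a geometric-growth argument on the three-term recurrence (using that $\mathrm{Re}(\lambda+\omega_0(n-1))$ grows linearly and dominates the $O(n)$ off-diagonal weight once $\mu aR_0\le \omega_0/2$) shows $\lambda=0$ is forced when $\hat h_1\neq 0$, and simultaneously pins down the kernel. I verified the recurrence $\hat h_{n+1}=\tfrac{2(\lambda+\omega_0(n-1))}{\mu aR_0}\hat h_n+\tfrac{n-2}{2}\hat h_{n-1}$ and both the base case ($n=2$) and the inductive step of your growth bound; they close under $\mu aR_0\le\omega_0/2$ as you indicate. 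Two small remarks. First, note your $\omega_0$ is the paper's $4\omega$; your final gap $\omega_0/2=2\mathcal{V}a/(\pi R_0^4)$ is consistent but larger in constant than the paper's choice. Second, the Gershgorin step is actually redundant: once row $0$ gives $\hat h_0=0$ (for $\mathrm{Re}\,\lambda>-\omega_0/2$), the positive- and negative-mode equations decouple, the lowest nonzero positive index must be $1$, and the growth argument alone concludes $\lambda=0$ regardless of where the maximal Fourier coefficient sits; the Gershgorin bound only re-derives part of what the recursion already proves. The upshot of your approach is a fully explicit smallness threshold and spectral gap; the paper's approach is shorter but leaves these constants implicit in the perturbation theorems it cites.
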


\begin{proof}
It readily follows from \eqref{r12} using an induction
  argument that $h\in
buc^{2+\alpha}(\Gamma_0)$ with $h=\sum_{n\in\Z} \hat h_n e^{in\theta}$
belongs to the kernel of $DH(0)$ if and only if $\hat h_n=0$ for
$n\in\Z\setminus\{\pm 1\}$. Hence the kernel of $DH(0)$ is spanned by
$\{ e^{-i\theta}, e^{i\theta}\}$. Next,
$$
DH(0)= A+\mu B,
$$
with
$-A\in\mathcal{H}\big(buc^{2+\alpha}(\Gamma_0),buc^{1+\alpha}(\Gamma_0)\big)$
(see Theorem~\ref{T1} with $\mu=0$ therein) given by 
\begin{equation}\label{r12a}
\begin{split}
A h& :=-\frac{12\mathcal{V}a}{\pi R_0^4}\, \hat h_0 -
\frac{4\mathcal{V}a}{\pi R_0^4}\sum_{n\in\Z^*}\big(\vert n\vert
-1\big)\,\hat h_n \, e^{in\theta}\end{split} 
\end{equation}
and $B\in\mathcal{L}\big(buc^{2+\alpha}(\Gamma_0),buc^{1+\alpha}(\Gamma_0)\big)$ given by
\begin{equation}\label{r12b}
\begin{split}
Bh:= - \frac{aR_0}{8}\sum_{n\in\Z^*}\left[\big(\vert n\vert +n-4\big)
  \, \hat h_{n-1} + \big(\vert n\vert -n-4\big) \, \hat
  h_{n+1}\right]\,   e^{in\theta}. 
\end{split}
\end{equation}
Set $\omega:=\frac{\mathcal{V}a}{\pi R_0^4}$. Since $\sigma(A)=
-4\omega\mathbb{N}$, \cite[I.Corollary 1.4.3]{LQPP} implies that there
are $\mu_0>0$ and $\vartheta\in(0,\pi/2)$ such that
$\sigma(A+\mu B)\subset \omega+\Sigma_\vartheta$ for $\mu\in [0,\mu_0)$, 
where $\Sigma_\vartheta:=[\arg \left(z\right) \in (\pi-\vartheta,\pi+\vartheta)]$.
Since zero is the only eigenvalue of $A$ in $\Sigma_\vartheta \cap
[\mathrm{Re}\, z> -\omega]$ and  since $A+\mu B\rightarrow A$ in the
generalized sense of \cite[IV.Theorem 2.24]{Kato} as $\mu\rightarrow
0$, it follows from \cite[IV.Theorem 3.16]{Kato} that zero is the only
eigenvalue of  $DH(0)= A+\mu B$ in $\Sigma_\vartheta \cap
[\mathrm{Re}\, z> -\omega]$ if $\mu\ge 0$ is sufficiently small. This
proves the assertion. 
\end{proof}

\subsection{Stability Analysis}

To analyze now the stability of the zero solution to \eqref{rhoevd} we
need to split off the zero eigenvalue of the linearization
$DH(0)$. For this it is useful to use a slightly different description
of the curves $\Gamma_\rho$ as provided by the next lemma. 

\begin{lem}\label{L5}
For each $\rho\in \mathcal{O}$ there is a unique $(z,\bar \rho)\in
\R^2\times \big(\mathrm{ker}(DH(0))\big)^\perp$ such that
$\Gamma_\rho=z+\Gamma_{\bar \rho}$. Moreover, $H(\rho)= H(\bar
\rho)$. 
\end{lem}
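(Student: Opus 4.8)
The plan is to realize the decomposition as the local inverse of an explicit nonlinear map between little-H\"older spaces, furnished by the inverse function theorem, and to read the identity $H(\rho)=H(\bar\rho)$ off the translation covariance of \eqref{d1c}--\eqref{d5c}. First I would set up the map to be inverted. Since here $\nu^\delta=\nu_0$ and the flow is radial, $\Gamma_\rho$ is just the polar graph $r=R_0+\rho(\theta)$. Given $(z,\bar\rho)\in\mathbb R^2\times(\ker DH(0))^\perp$ with $|z|$ and $\|\bar\rho\|_{2,\alpha}$ small, the translated curve $z+\Gamma_{\bar\rho}$ is a $\operatorname{C}^1$-small perturbation of $\Gamma_0$, hence again a polar graph over $\Gamma_0$ by Lemma~\ref{L25}; I denote its height function by $\Psi(z,\bar\rho)\in buc^{2+\alpha}(\Gamma_0)$, so that $z+\Gamma_{\bar\rho}=\Gamma_{\Psi(z,\bar\rho)}$. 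The polar representation depends smoothly on its data, so $\Psi$ is of class $\operatorname{C}^1$ (in fact $\operatorname{C}^2$) near $(0,0)$ with $\Psi(0,0)=0$.

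Next I would compute $D\Psi(0,0)$. Since $\Psi(0,\bar\rho)=\bar\rho$, the $\bar\rho$-derivative at the origin is the identity of $(\ker DH(0))^\perp$; and since translating $\Gamma_0$ by a small $z$ changes the polar height at angle $\theta$ to first order by the normal component of $z$, the $z$-derivative is $z\mapsto(z\mid\nu_0(\cdot))=z^1\cos\theta+z^2\sin\theta$. Hence
$$
D\Psi(0,0)(z,\bar\rho)=(z\mid\nu_0)+\bar\rho .
$$
By Lemma~\ref{L51}, $\ker DH(0)=\operatorname{span}\{\cos\theta,\sin\theta\}=\{(z\mid\nu_0):z\in\mathbb R^2\}$, and being finite-dimensional it is complemented, so $buc^{2+\alpha}(\Gamma_0)=\ker DH(0)\oplus(\ker DH(0))^\perp$. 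Thus $D\Psi(0,0)$ is a topological isomorphism of $\mathbb R^2\times(\ker DH(0))^\perp$ onto $buc^{2+\alpha}(\Gamma_0)$, and the inverse function theorem makes $\Psi$ a $\operatorname{C}^1$-diffeomorphism between neighbourhoods of $(0,0)$ and of $0$. Shrinking $\mathcal O$ to lie inside that image yields, for each $\rho\in\mathcal O$, the unique small $(z,\bar\rho)$ with $\Gamma_\rho=z+\Gamma_{\bar\rho}$ and $\bar\rho\perp\ker DH(0)$.

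For the identity $H(\rho)=H(\bar\rho)$ I would use translation covariance. If $(u_{\bar\rho},\lambda_{\bar\rho})$ solves \eqref{d1}--\eqref{d3} on $\Omega_{\bar\rho}$, then $x\mapsto u_{\bar\rho}(x-z)$ solves the same system on $\Omega_\rho=z+\Omega_{\bar\rho}$ with the same volume $\mathcal V$ and with the constant replaced by $\lambda_{\bar\rho}-\mu z^1$; by the uniqueness in Proposition~\ref{P1}, $u_\rho(x)=u_{\bar\rho}(x-z)$ and $\lambda_\rho=\lambda_{\bar\rho}-\mu z^1$. A rigid translation preserves outward unit normals and hence leaves $\partial_\nu u$ and $e_1\cdot\nu$ unchanged along corresponding boundary points, while the radial flow gives $\nu^\delta\equiv\nu_0$; consequently all the ingredients of the right-hand side $(\nu_0\mid\nu_\bullet)^{-1}\bigl(F(-\partial_{\nu_\bullet}u_\bullet)-v_0\,e_1\cdot\nu_\bullet\bigr)$ are carried from $\Gamma_{\bar\rho}$ to $\Gamma_\rho$ by $x\mapsto x+z$, which is exactly $H(\rho)=H(\bar\rho)$.

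The main obstacle will be the second step: pinning down $D\Psi(0,0)$ and recognizing that its range in the $z$-direction is exactly the kernel $\operatorname{span}\{\cos\theta,\sin\theta\}$ of Lemma~\ref{L51}, since this is what makes $D\Psi(0,0)$ bijective and couples the slice $\bar\rho\perp\ker DH(0)$ to the spectral splitting needed in the stability proof. In the third step the point requiring care is that the polar parametrization of $\Gamma_\rho$ be compatible with that of $\Gamma_{\bar\rho}$ under the translation $z$, so that also the prefactor $(\nu_0\mid\nu_\bullet)$ is transported; graph-representability and smoothness of $\Psi$ are then routine via Lemma~\ref{L25}.
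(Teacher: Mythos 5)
Your proposal is correct and follows essentially the same route as the paper. For the decomposition the paper simply cites \cite[Lemma 5.2]{PG1}, which is precisely the inverse-function-theorem argument you carry out: the map $(z,\bar\rho)\mapsto\Psi(z,\bar\rho)$ with $D\Psi(0,0)(z,\bar\rho)=(z\mid\nu_0)+\bar\rho$, noting that $\{(z\mid\nu_0):z\in\R^2\}=\operatorname{span}\{e^{\pm i\theta}\}=\ker DH(0)$ by Lemma~\ref{L51}, so that $D\Psi(0,0)$ is a topological isomorphism onto $buc^{2+\alpha}(\Gamma_0)=\ker DH(0)\oplus(\ker DH(0))^\perp$. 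For $H(\rho)=H(\bar\rho)$ the paper, like you, invokes translation covariance of \eqref{d1}--\eqref{d3} (via uniqueness in Proposition~\ref{P1}), giving $u_{\Gamma_{\bar\rho}}=u_{\Gamma_\rho}(\cdot+z)$ and $\nu_{\Gamma_{\bar\rho}}=\nu_{\Gamma_\rho}(\cdot+z)$, and then reads the identity off the definition of $H$; your remark that one must track how the polar parametrizations of $\Gamma_\rho$ and $\Gamma_{\bar\rho}$ match up under $x\mapsto x+z$ is exactly the care the paper also (implicitly) exercises when it passes to \eqref{107}--\eqref{110}.
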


\begin{proof}
Since the kernel of $DH(0)$ is spanned by $\{ e^{-i\theta},
e^{i\theta}\}$ {according to} Lemma~\ref{L51}, the existence of a unique $(z,\bar
\rho)\in \R^2\times \left(\mathrm{ker}(DH(0))\right)^\perp$ with
$\Gamma_\rho=z+\Gamma_{\bar \rho}$ is shown in \cite[Lemma
5.2]{PG1}. That $H(\rho)=H(\bar \rho)$ follows as in \cite[Lemma
5.1]{PG1} from the translation invariance of the problem. Indeed, if
$\Gamma=\partial\Omega$ and $u_\Gamma$ solves 
\begin{alignat*}{2}
  -\Delta u&=\mu x^1+\lambda\quad && \text{in }\Omega,\\
  u&=0&&\text{on }\Gamma,\\
  \int _{\Omega}u\, dx&=\mathcal{V},&&
 \end{alignat*}
then $u_{\Gamma_{\bar\rho}}=u_{\Gamma_{\rho}}(\cdot +z)$ on
$\Omega_{\bar\rho}=-z+\Omega_\rho$ and
$\nu_{\Gamma_{\bar\rho}}=\nu_{\Gamma_{\rho}} (\cdot +z)$ on
$\Gamma_{\bar\rho}=-z+\Gamma_\rho$. The definition of $H$ implies the
assertion. 
\end{proof}

We next derive the evolution in the new coordinates $(z,\bar
\rho)=(z(\rho),\bar \rho(\rho))$. Let $\nu_0=(\cos,\sin)$ and
$\tau_0=(-\sin,\cos)$ denote the unit normal and unit
tangent vector to $\Gamma_0$, respectively,
and let
$$ 
p=(R_0+\rho(t,\theta))\nu_0(\theta)= z+(R_0+\bar\rho(t,\phi))\nu_0(\phi)
$$
be an arbitrary point on $\Gamma_{\rho(t,\cdot)}=z+
\Gamma_{\bar\rho(t,\cdot)}$. We often suppress the time variable $t$
in the following. Differentiating with respect to $\phi$ implies that
\begin{equation}\label{107}
 \nu_{\Gamma_{\rho}}(p)=\frac{-\partial_\phi\bar\rho(\phi)\tau_0(\phi)
 +(R_0+\bar\rho(\phi))\nu_0(\phi)}{\big((\partial_\phi\bar
 \rho(\phi))^2+(R_0+\bar\rho(\phi))^2\big)^{1/2}}.
\end{equation}
Now, since $V_{\Gamma_\rho}(p)=\big(\dot
z+\dot{\bar\rho}(\phi)\nu_0(\phi)\big)\cdot \nu_\rho(p)$ with {dots}
indicating time derivatives, it follows from the definition of $H$ and
$H(\bar\rho)=H(\rho)$  that
$$
H(\bar\rho)(p)=\frac{1}{\nu_0(\phi)\cdot\nu_\rho(p)} \big(\dot
z+\dot{\bar\rho}(\phi)\nu_0(\phi)\big)\cdot \nu_\rho(p).
$$
Therefore, from \eqref{107} we derive that
\begin{equation}\label{110}
\begin{split}
H(\bar\rho)=\left(\frac{\partial_\phi
    \bar\rho(\phi)}{R_0+\bar\rho(\phi)} \sin\phi  +\cos\phi\right)\dot
z_1 +\left(-\frac{\partial_\phi \bar\rho(\phi)}{R_0+\bar\rho(\phi)}
  \cos\phi +\sin\phi\right) \dot z_2+  \dot{\bar\rho}.
\end{split}
\end{equation}
Let $\Pi^{(\pm 1)}\in\mathcal{L}(buc^{1+\alpha}(\Gamma_0))$ denote the
projection onto the subspace spanned by $e^{\pm i\theta}$, that is, 
$$
\Pi^{(\pm 1)} f :=\left(\frac{1}{2\pi}\int_0^{2\pi} f(\vartheta)e^{\mp
    i\vartheta}\, d\vartheta\right) e^{\pm i\theta}, 
$$
and let $\Pi^\perp\in\mathcal{L}(buc^{1+\alpha}(\Gamma_0))$  denote
the projection onto $\big(\mathrm{ker}(DH(0))\big)^\perp$. We then
apply these projections to \eqref{110} to derive the evolution for
$(z,\bar\rho)$. For a more compact notation we introduce 
$$
M(\bar\rho):=\left[\begin{matrix} \frac{1}{2} & \frac{1}{2i}\\
    \frac{1}{2} & -\frac{1}{2i}\end{matrix} \right] + 
\left[\begin{matrix} \frac{1}{4\pi i}\int_0^{2\pi} \frac{\partial_\phi
      \bar\rho(\phi)}{R_0+\bar\rho(\phi)}\left(1-e^{-2\phi i}\right)
    d\phi  & -\frac{1}{4\pi }\int_0^{2\pi} \frac{\partial_\phi
      \bar\rho(\phi)}{R_0+\bar\rho(\phi)}\left(e^{-2\phi i}+1\right)
    d\phi\\
\frac{1}{4\pi i}\int_0^{2\pi} \frac{\partial_\phi
  \bar\rho(\phi)}{R_0+\bar\rho(\phi)}\left(e^{2\phi i}-1\right) d\phi
& -\frac{1}{4\pi}\int_0^{2\pi} \frac{\partial_\phi
  \bar\rho(\phi)}{R_0+\bar\rho(\phi)}\left(e^{2\phi i}+1\right)
d\phi \end{matrix} \right] 
$$
and observe that 
$$
M(\bar\rho)= \left[\begin{matrix} \frac{1}{2} & \frac{1}{2i}\\
    \frac{1}{2} & -\frac{1}{2i}\end{matrix} \right] +
O\bigl(\|\bar\rho\|_{buc^{2+\alpha}(\Gamma_0)}\bigr)
$$
is invertible if $\bar\rho\in\mathcal{O}$ is small. Further set
$$
\Pi (H(\bar\rho)):=\left(\begin{matrix} \Pi^{(+1)} (H(\bar\rho)) \\
    \Pi^{(-1)} (H(\bar\rho))\end{matrix}\right) 
$$
and
$$
f(\bar\rho):=\Pi^\perp\left( \frac{\partial_\phi
    \bar\rho}{R_0+\bar\rho} \tau_0 \cdot M(\bar\rho)^{-1} \Pi
  (H(\bar\rho))\right).
$$
Then we obtain from \eqref{110}: 

\begin{prop}
The evolution for $\rho$ governed by \eqref{rhoevd} is equivalent to the system 
\begin{align}
\dot z&= M(\bar\rho)^{-1} \Pi (H(\bar\rho)),\label{z}\\
\dot{\bar\rho}&= \Pi^\perp H(\bar\rho) +f(\bar\rho),\label{barrho}
\end{align}
for $(z,\bar\rho)$.
\end{prop}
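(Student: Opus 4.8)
The plan is to derive the system \eqref{z}--\eqref{barrho} directly from the identity \eqref{110}, by applying the projections $\Pi^{(\pm 1)}$ and $\Pi^\perp$ to both sides and solving for the unknowns $\dot z$ and $\dot{\bar\rho}$. First I would observe that \eqref{110} expresses $H(\bar\rho)$ as a sum of three terms: the first two are linear in $\dot z_1$, $\dot z_2$ with coefficients that are functions of $\phi$, and the third is simply $\dot{\bar\rho}(\phi)$. Splitting the coefficient functions of $\dot z_1$ and $\dot z_2$ into their constant part (coming from $\cos\phi$, respectively $\sin\phi$) and the remainder involving $\partial_\phi\bar\rho/(R_0+\bar\rho)$, I would rewrite \eqref{110} as
\begin{equation*}
H(\bar\rho) = \bigl(\cos\phi,\,\sin\phi\bigr)\cdot\dot z + \frac{\partial_\phi\bar\rho(\phi)}{R_0+\bar\rho(\phi)}\,\tau_0(\phi)\cdot\dot z + \dot{\bar\rho},
\end{equation*}
using $\tau_0(\phi)=(-\sin\phi,\cos\phi)$.

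Next I would apply $\Pi^{(+1)}$ and $\Pi^{(-1)}$ to this identity. Since $\Pi^{(\pm1)}\dot{\bar\rho}=0$ because $\bar\rho\in(\mathrm{ker}(DH(0)))^\perp$ and therefore $\dot{\bar\rho}$ lies in the same subspace (which is $\Pi^\perp$-invariant and orthogonal to $\mathrm{span}\{e^{\pm i\theta}\}$), the third term drops out under $\Pi^{(\pm1)}$. Writing $\cos\phi=\tfrac12(e^{i\phi}+e^{-i\phi})$, $\sin\phi=\tfrac1{2i}(e^{i\phi}-e^{-i\phi})$, the projection of the first term produces precisely the constant matrix $\left[\begin{smallmatrix}\frac12&\frac1{2i}\\\frac12&-\frac1{2i}\end{smallmatrix}\right]$ acting on $\dot z$, while the projection of the second term, after expanding $\tau_0(\phi)\cdot\dot z$ and $e^{\mp i\phi}$ against $e^{\pm2i\phi}$ and $1$, yields exactly the integral matrix in the definition of $M(\bar\rho)$ acting on $\dot z$. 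Thus $\Pi(H(\bar\rho)) = M(\bar\rho)\dot z$, and since $M(\bar\rho)$ is invertible for small $\bar\rho$ this gives \eqref{z}.

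Finally, applying $\Pi^\perp$ to the rewritten identity, the first term $(\cos\phi,\sin\phi)\cdot\dot z$ is annihilated since it lies in $\mathrm{span}\{e^{\pm i\theta}\}=\mathrm{ker}(DH(0))$, and $\Pi^\perp\dot{\bar\rho}=\dot{\bar\rho}$; hence
\begin{equation*}
\Pi^\perp H(\bar\rho) = \Pi^\perp\!\left(\frac{\partial_\phi\bar\rho}{R_0+\bar\rho}\,\tau_0\cdot\dot z\right) + \dot{\bar\rho}.
\end{equation*}
Substituting $\dot z = M(\bar\rho)^{-1}\Pi(H(\bar\rho))$ from \eqref{z} into the right-hand side and recognizing the resulting expression as $f(\bar\rho)$, I obtain \eqref{barrho}. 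The equivalence is completed by noting, via Lemma~\ref{L5}, that the map $\rho\mapsto(z(\rho),\bar\rho(\rho))$ is a diffeomorphism near zero with $H(\rho)=H(\bar\rho)$, so a solution of \eqref{rhoevd} produces a solution of \eqref{z}--\eqref{barrho} and conversely. The only subtle point to check carefully is the bookkeeping of the Fourier modes in the second term when computing the $\Pi^{(\pm1)}$ projections — one must verify that the cross terms land in exactly the off-diagonal and diagonal slots of $M(\bar\rho)$ as written, which is a routine but error-prone computation with $e^{\pm 2i\phi}$ factors; this is the main place where signs and factors of $i$ must be tracked with care.
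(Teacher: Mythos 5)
Your approach — apply $\Pi^{(\pm1)}$ and $\Pi^\perp$ to \eqref{110}, invert $M(\bar\rho)$, and back-substitute — is exactly what the paper intends (it gives no proof beyond ``Then we obtain from \eqref{110}''), and the structure of your argument is right, including the observation that $\Pi^{(\pm1)}\dot{\bar\rho}=0$ because $\dot{\bar\rho}\in\big(\mathrm{ker}\,DH(0)\big)^\perp$.

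However, there is a sign error in your compact rewriting of \eqref{110} that, if carried through, fails to reproduce \eqref{z} and \eqref{barrho}. With $\nu_0=(\cos\phi,\sin\phi)$ and $\tau_0=(-\sin\phi,\cos\phi)$, equation \eqref{110} reads
\begin{equation*}
H(\bar\rho)=\nu_0\cdot\dot z-\frac{\partial_\phi\bar\rho}{R_0+\bar\rho}\,\tau_0\cdot\dot z+\dot{\bar\rho},
\end{equation*}
whereas you wrote a plus sign on the $\tau_0$ term. To see the minus sign, note that $\tau_0\cdot\dot z=-\sin\phi\,\dot z_1+\cos\phi\,\dot z_2$, so $-\frac{\partial_\phi\bar\rho}{R_0+\bar\rho}\tau_0\cdot\dot z$ contributes $+\frac{\partial_\phi\bar\rho}{R_0+\bar\rho}\sin\phi$ to the $\dot z_1$-coefficient and $-\frac{\partial_\phi\bar\rho}{R_0+\bar\rho}\cos\phi$ to the $\dot z_2$-coefficient, which is what appears in \eqref{110}; your version gives the opposite signs. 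The consequence is twofold: first, when you project onto $e^{\pm i\phi}$, the $\phi$-dependent term produces the \emph{negative} of the integral matrix in the definition of $M(\bar\rho)$, so you would obtain $\Pi(H(\bar\rho))$ equal to $\left[\begin{smallmatrix}\frac12&\frac1{2i}\\\frac12&-\frac1{2i}\end{smallmatrix}\right]\dot z$ minus (rather than plus) the integral matrix times $\dot z$; second, solving for $\dot{\bar\rho}$ then yields $\dot{\bar\rho}=\Pi^\perp H(\bar\rho)-f(\bar\rho)$ instead of $+f(\bar\rho)$, because $f$ in the paper is defined with $\tau_0$ (not $-\tau_0$), and the minus sign in the compact form is exactly what converts the moved-over term into $+f(\bar\rho)$. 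Fixing the sign in the compact rewriting repairs both of these discrepancies, and the rest of your argument (including the equivalence via Lemma~\ref{L5}) then goes through as you describe.
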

Next, we introduce
$$
buc_\perp^{k+\alpha}(\Gamma_0):=\Pi^\perp buc^{k+\alpha}(\Gamma_0),\quad k=1,2.
$$
Then $\mathcal{O}_\perp:=\mathcal{O}\cap buc_\perp^{2+\alpha}(\Gamma_0)$ is an open zero-neighborhood in $buc_\perp^{2+\alpha}(\Gamma_0)$ and \eqref{H} entails that
\begin{equation}\label{114}
f\in C^2\big(\mathcal{O}_\perp,buc_\perp^{1+\alpha}(\Gamma_0)\big)\quad  \text{with}\quad f(0)=0,\ D f(0)=0.
\end{equation}
Moreover, for the linearization of $\Pi^\perp H\in C^2\big(\mathcal{O}_\perp, buc_\perp^{1+\alpha}(\Gamma_0)\big)$ at zero we have:

\begin{prop}\label{P44}
Let $\mu\ge 0$ be sufficiently small. Then
$$
-D\big(\Pi^\perp H\big)(0)\in\mathcal{H}\big(buc_\perp^{2+\alpha}(\Gamma_0),buc_\perp^{1+\alpha}(\Gamma_0)\big),
$$ 
and there is $\omega_0>0$ such that
$$
\sigma\big(D\big(\Pi^\perp H\big)(0)\big)\subset [\mathrm{Re} z\le -2\omega_0].
$$
\end{prop}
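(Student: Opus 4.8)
The plan is to reduce the statement to the perturbation argument already carried out in Lemma~\ref{L51}, after restricting everything to the complement of the kernel. First I would note that, since $\Pi^\perp\in\mathcal{L}(buc^{1+\alpha}(\Gamma_0))$ is bounded and linear and restricts to a bounded operator on $buc^{2+\alpha}(\Gamma_0)$, differentiating $\Pi^\perp H$ at $0$ simply yields $D(\Pi^\perp H)(0)=\Pi^\perp\,DH(0)\big|_{buc_\perp^{2+\alpha}(\Gamma_0)}$, with $DH(0)=A+\mu B$ and $A,B$ as in \eqref{r12a}--\eqref{r12b}. The structural point is that $A$ is diagonal with respect to the Fourier basis $(e^{in\theta})_{n\in\Z}$: it multiplies the mode $e^{in\theta}$ by $0$ for $|n|=1$, by $-12\omega$ for $n=0$, and by $-4\omega(|n|-1)$ otherwise, where $\omega=\mathcal{V}a/(\pi R_0^4)$. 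Hence $A$ commutes with every Fourier projection, in particular with $\Pi^{(\pm 1)}$ and with $\Pi^\perp=\operatorname{id}-\Pi^{(+1)}-\Pi^{(-1)}$, and it maps $buc_\perp^{k+\alpha}(\Gamma_0)$ into itself. Writing $A_\perp:=A\big|_{buc_\perp^{2+\alpha}(\Gamma_0)}$ and $B_\perp:=\Pi^\perp B\big|_{buc_\perp^{2+\alpha}(\Gamma_0)}\in\mathcal{L}\big(buc_\perp^{2+\alpha}(\Gamma_0),buc_\perp^{1+\alpha}(\Gamma_0)\big)$ (the latter bounded by \eqref{r12b}), one gets
$$
D\big(\Pi^\perp H\big)(0)=A_\perp+\mu B_\perp .
$$

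Next I would record two properties of $A_\perp$. Since $A$ commutes with the bounded projection $\Pi^\perp$ and $-A\in\mathcal{H}\big(buc^{2+\alpha}(\Gamma_0),buc^{1+\alpha}(\Gamma_0)\big)$ by Theorem~\ref{T1} (case $\mu=0$), the resolvent of $A$ restricts to $buc_\perp^{1+\alpha}(\Gamma_0)$ and retains the same sectorial bounds, so $-A_\perp\in\mathcal{H}\big(buc_\perp^{2+\alpha}(\Gamma_0),buc_\perp^{1+\alpha}(\Gamma_0)\big)$ with domain $buc_\perp^{2+\alpha}(\Gamma_0)$. Moreover, removing the modes $e^{\pm i\theta}$ — the only eigenfunctions of $A$ for the eigenvalue $0$ — leaves $\sigma(A_\perp)=-4\omega\{1,2,3,\dots\}\subset[\mathrm{Re}\,z\le-4\omega]$; in particular the spectrum of $A_\perp$ is bounded away from the imaginary axis by $4\omega$.

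Finally, $B_\perp$ is a bounded, hence infinitesimally small, perturbation of $A_\perp$. Openness of $\mathcal{H}\big(buc_\perp^{2+\alpha}(\Gamma_0),buc_\perp^{1+\alpha}(\Gamma_0)\big)$ in $\mathcal{L}\big(buc_\perp^{2+\alpha}(\Gamma_0),buc_\perp^{1+\alpha}(\Gamma_0)\big)$ then gives $-D(\Pi^\perp H)(0)=-A_\perp-\mu B_\perp\in\mathcal{H}$ for all sufficiently small $\mu\ge0$. For the spectral estimate I would run the argument of Lemma~\ref{L51} verbatim with $A$ replaced by $A_\perp$: since $\sigma(A_\perp)\subset(-\infty,-4\omega]$, \cite[I.Corollary 1.4.3]{LQPP} provides $\mu_0>0$ and $\vartheta\in(0,\pi/2)$ with $\sigma(A_\perp+\mu B_\perp)\subset-2\omega+\Sigma_\vartheta\subset[\mathrm{Re}\,z\le-2\omega]$ for $\mu\in[0,\mu_0)$, the vertex being placeable at $-2\omega$ precisely because the unperturbed spectrum already starts at $-4\omega$ rather than at $0$. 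Taking $\omega_0:=\omega$ completes the proof. There is no genuine obstacle here beyond the bookkeeping of the first paragraph — checking that $\Pi^\perp$ is bounded on the little H\"older scale, commutes with $A$, and that the restriction of an analytic generator to a complemented invariant subspace is again an analytic generator; once these are in place the conclusion is exactly the perturbation scheme of Lemma~\ref{L51}, now with a spectral gap present from the outset instead of having to project off the zero eigenvalue.
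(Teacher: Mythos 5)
Your proof is correct and follows essentially the same route as the paper: write $D(\Pi^\perp H)(0)=A_\perp+\mu B_\perp$ with $A_\perp=A|_{buc_\perp^{2+\alpha}(\Gamma_0)}$, observe $\sigma(A_\perp)=-4\omega\mathbb{N}\setminus\{0\}$ has a spectral gap, and conclude by bounded perturbation. The only cosmetic difference is the exact reference inside Amann's book (you reuse I.Corollary~1.4.3 from Lemma~\ref{L51} whereas the paper invokes I.Proposition~1.4.2 here); both deliver the same sectorial perturbation estimate.
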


\begin{proof} Observe that, owing to \eqref{r12a}, \eqref{r12b},
$$
D\big(\Pi^\perp H\big)(0)= \Pi^\perp DH(0) =A_\perp +\mu B_\perp,
$$
where
\begin{equation*}
\begin{split}
A_\perp h& :=-\frac{12\mathcal{V}a}{\pi R_0^4}\, \hat h_0 - \frac{4\mathcal{V}a}{\pi R_0^4}\sum_{\substack{ n\in\Z^*\\ n\not=\pm 1}}\big(\vert n\vert -1\big)\,\hat h_n \, e^{in\theta}
\end{split}
\end{equation*}
and 
\begin{equation*}
\begin{split}
B_\perp h:= - \frac{aR_0}{8}\sum_{\substack{ n\in\Z^*\\ n\not=\pm 1}}\left[\big(\vert n\vert +n-4\big) \, \hat h_{n-1} + \big(\vert n\vert -n-4\big) \, \hat h_{n+1}\right]\,   e^{in\theta}
\end{split}
\end{equation*}
for $h\in buc_\perp^{2+\alpha}(\Gamma_0)$ with
$h(\theta)=\sum\limits_{n\in\Z,\, n\not=\pm 1} \hat h_n e^{in\theta}.$
Since $-A\in\mathcal{H}\big(buc^{2+\alpha}(\Gamma_0),buc^{1+\alpha}(\Gamma_0)\big)$ by Theorem~\ref{T1} (with $\mu=0$), one readily deduces from the obvious fact
$$
A_\perp=A\big\vert_{buc_\perp^{2+\alpha}(\Gamma_0)}
$$
that
$-A_\perp\in\mathcal{H}\big(buc_\perp^{2+\alpha}(\Gamma_0),buc_\perp^{1+\alpha}(\Gamma_0)\big)$. Observing then that
$$
\sigma(A_\perp)=\sigma(A)\setminus\{0\} =-4\omega\mathbb{N}\setminus\{0\},\quad \omega:=\frac{\mathcal{V}a}{\pi R_0^4},
$$
the assertion follows from the perturbation result \cite[I.Proposition 1.4.2]{LQPP}.
\end{proof}

Due to Proposition~\ref{P44} and \eqref{114} we are in a position to
apply the principle of linearized stability from \cite[Theorem
9.1.2]{Lunardi} to the equation \eqref{barrho} for $\bar\rho$. Thus,
there are $r>0$ and $M>0$ such that for any initial value
$\bar\rho_0\in buc_\perp^{2+\alpha}(\Gamma_0)$ with
$\|\bar\rho_0\|_{buc_\perp^{2+\alpha}}<r$, the solution $\bar\rho$ to
\eqref{barrho} with $\bar\rho(0)=\bar\rho_0$ exists globally in time
and 
\begin{equation}\label{z1}
\|\bar\rho(t)\|_{buc_\perp^{2+\alpha}}+\|\dot{\bar\rho}(t)\|_{buc_\perp^{1+\alpha}}
\le Me^{-\omega_0 t},\quad t\ge 0. 
\end{equation}
Plugging this into \eqref{z} and observing that the right-hand side of
\eqref{z} is of order
$O\bigl(\|\bar\rho\|_{buc_\perp^{2+\alpha}}\bigr)$ owing
to \eqref{H}, it readily follows that there is $z_\infty\in\R^2$ such that 
\begin{equation}\label{z2}
\vert z(t)-z_\infty\vert\le c e^{-\omega_0 t},\quad t\ge 0.
\end{equation}
Therefore, using again the original coordinates instead of
$(z,\bar\rho)$ and noticing that
$\|\bar\rho(\rho)\|_{buc_\perp^{2+\alpha}}$ is small if and only if
$\|\rho\|_{buc^{2+\alpha}}$ is,  we arrive at:

\begin{thm}\label{T4}
Assume that \eqref{affine} holds. Then, for small
incline $\mu>0$, the stationary solution $(u_0,\Omega_*)$ to
\eqref{d1c}-\eqref{d5c} from \eqref{ui} is asymptotically
exponentially stable. More precisely, if $\mu>0$ is small and given an
initial geometry
$$\Gamma_{\rho_0}=\{(R_0+\rho_0(\theta))e^{i\theta}\} $$
with $\|\rho_0\|_{buc^{2+\alpha}(\Gamma_0)}$ sufficiently small, there exists
$$
\rho\in C^1\big(\R^+,buc^{1+\alpha}(\Gamma_0)\big)\cap
C\big(\R^+,buc^{2+\alpha}(\Gamma_0)\big) 
$$
and $u_\rho$ with
$$
u_{\rho(t)}\in
buc^{1+\alpha}\big(\Omega_{\rho(t)})\big),\quad \partial\Omega_{\rho(t)}=\Gamma_{\rho(t)}
$$
for $t\ge 0$
such that $(\rho,u_\rho)$ satisfies \eqref{rhoevd}. Moreover,
$$
\Gamma_{\rho(t)}=z(t)+\Gamma_{\bar\rho(t)},\quad t\ge 0,
$$
 with
$$
\bar\rho\in C^1\big(\R^+,buc^{1+\alpha}(\Gamma_0)\big)\cap
C\big(\R^+,buc^{2+\alpha}(\Gamma_0)\big) ,\qquad   z\in
C^1\big(\R^+,\R^2\big) 
$$
satisfying \eqref{z1} and \eqref{z2}.
\end{thm}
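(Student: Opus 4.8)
The plan is to assemble the results established in this section. Local well-posedness of \eqref{rhoevd} is already contained in Theorem~\ref{T2}; what remains is to upgrade it, for small data, to global existence with exponential decay, and this is where the splitting of the evolution into the $z$- and $\bar\rho$-equations \eqref{z}--\eqref{barrho} pays off. Concretely I would proceed in three steps: (i) apply the principle of linearized stability to \eqref{barrho}; (ii) feed the resulting decay into \eqref{z} and integrate in time; (iii) undo the change of variables of Lemma~\ref{L5} and reconstruct $u_\rho$ from Proposition~\ref{P1}.

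For step (i), the equation \eqref{barrho}, namely $\dot{\bar\rho} = \Pi^\perp H(\bar\rho) + f(\bar\rho)$, is posed on $buc_\perp^{1+\alpha}(\Gamma_0)$ with phase space $buc_\perp^{2+\alpha}(\Gamma_0)$. Its linearization at the origin is $D(\Pi^\perp H)(0) = A_\perp + \mu B_\perp$, which by Proposition~\ref{P44} generates an analytic semigroup and has spectrum contained in $[\mathrm{Re}\,z \le -2\omega_0]$, while by \eqref{114} the nonlinear remainder $f$ is of class $C^2$ near the origin with $f(0) = 0$ and $Df(0) = 0$. Since the little H\"older spaces have the continuous-interpolation property (as already exploited for Theorem~\ref{T2}), \cite[Theorem~9.1.2]{Lunardi} applies and yields $r, M > 0$ such that for $\|\bar\rho_0\|_{buc_\perp^{2+\alpha}} < r$ the solution of \eqref{barrho} with $\bar\rho(0) = \bar\rho_0$ exists for all $t \ge 0$ and satisfies the bound \eqref{z1}.

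For step (ii), I would insert \eqref{z1} into $\dot z = M(\bar\rho)^{-1}\Pi(H(\bar\rho))$. Because $H(0) = 0$ and $H \in C^2$ by \eqref{H}, the factor $\Pi(H(\bar\rho))$ is $O(\|\bar\rho\|_{buc^{2+\alpha}})$ while $M(\bar\rho)^{-1}$ stays bounded for $\bar\rho$ small; hence $|\dot z(t)| \le c\,e^{-\omega_0 t}$, so $(z(t))_{t \ge 0}$ is Cauchy and converges to some $z_\infty \in \R^2$ at the rate recorded in \eqref{z2}. For step (iii), Lemma~\ref{L5} gives a locally bi-Lipschitz correspondence between $\rho$ and $(z, \bar\rho)$ near the origin, so smallness of $\|\rho_0\|_{buc^{2+\alpha}}$ is equivalent to smallness of $\|\bar\rho(\rho_0)\|_{buc_\perp^{2+\alpha}}$, and the reconstructed $\rho(t) = \rho(z(t), \bar\rho(t))$ inherits the asserted regularity $C^1(\R^+, buc^{1+\alpha}(\Gamma_0)) \cap C(\R^+, buc^{2+\alpha}(\Gamma_0))$ together with $\Gamma_{\rho(t)} = z(t) + \Gamma_{\bar\rho(t)}$ and the estimates \eqref{z1}--\eqref{z2}. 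Finally, with $\rho(t)$ in hand, $u_{\rho(t)}$ is the solution of \eqref{d1}--\eqref{d3} on $\Omega_{\rho(t)}$ furnished by Proposition~\ref{P1}, which lies in $buc^{2+\alpha}(\Omega_{\rho(t)}) \hookrightarrow buc^{1+\alpha}(\Omega_{\rho(t)})$ and depends continuously on $\rho(t)$; the pair $(\rho, u_\rho)$ then solves \eqref{rhoevd} and hence, via the reformulation, \eqref{d1c}--\eqref{d5c}.

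I expect the one genuinely substantive ingredient to be the spectral gap for the reduced linearization --- but this is already in hand through Lemma~\ref{L51} and Proposition~\ref{P44}. The two-dimensional kernel of $DH(0)$, spanned by $e^{\pm i\theta}$, reflects the translation invariance of the moving-boundary problem, so stability can only be expected modulo translations; once that kernel is projected out one must check that no eigenvalue has migrated onto or across the imaginary axis as $\mu$ is switched on. This was achieved by writing $DH(0) = A + \mu B$ with $A$ explicitly diagonalizable ($\sigma(A) = -4\omega\mathbb{N}$) and invoking analytic-perturbation theory for small $\mu$. Everything downstream of that reduction is soft, so no further obstacle is anticipated.
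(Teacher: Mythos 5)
Your proposal follows the paper's own proof essentially line for line: apply Lunardi's principle of linearized stability (Theorem 9.1.2) to the $\bar\rho$-equation \eqref{barrho} using Proposition~\ref{P44} and \eqref{114} to get \eqref{z1}, feed the exponential decay into the $z$-equation \eqref{z} and integrate to get \eqref{z2}, then translate back to $\rho$ via Lemma~\ref{L5} and recover $u_\rho$ from Proposition~\ref{P1}. The argument is correct and there is no substantive deviation from the paper's route.
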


The above theorem shows that,  for small inclines and
when starting out with a droplet geometry sufficiently close to the
disk of radius $R_0$, the droplet asymptotically becomes circular of
radius $R_0$ sliding down the plane with constant speed $v_0$.


\end{document}